\newtheorem{thm}{Theorem}[section]
\newtheorem{prop}[thm]{Proposition}
\newtheorem{lem}[thm]{Lemma}
\theoremstyle{definition}
\theoremstyle{claim}
\newtheorem{claim}[thm]{Claim}
\theoremstyle{remark}
\newtheorem{remark}[thm]{Remark}
\numberwithin{equation}{section}
\begin{document}

%%
%% The title of the paper goes here.  Edit to your title.
%%

\title{A Natural Min-Max Construction for Ginzburg-Landau Functionals}

\author{Daniel L. Stern}
\address{Department of Mathematics, Princeton University, 
Princeton, NJ 08544}
\email{dls6@math.princeton.edu}

%%
%% If there are three of more authors they are added in the obvious
%% way. 
%%

%%%
%%% The following is for the abstract.  The abstract is optional and
%%% if not used just delete, or comment out, the following.
%%%

\begin{abstract}
We use min-max techniques to produce a family of nontrivial solutions $u_{\epsilon}: M^n\to \mathbb{R}^2$ of the Ginzburg-Landau equation
$$\Delta u_{\epsilon}+\frac{1}{\epsilon^2}(1-|u_{\epsilon}|^2)u_{\epsilon}=0$$
on a given compact Riemannian manifold $M^n$, whose energy grows like $|\log\epsilon|$ as $\epsilon\to 0$. Building on the analysis of \cite{BBO}, we show that when the degree one cohomology $H^1_{dR}(M)=0$, the energy of these solutions concentrates on a nontrivial stationary, rectifiable $(n-2)$-varifold $V$.
\end{abstract}

\maketitle

%%
%% LaTeX can automatically make a table of contents.  This is done by
%% uncommenting the following:
%%

\tableofcontents

%%
%% A Theorem is stated by
%%

%\begin{thm} The square of any real number is non-negative.
%\end{thm}

% \begin{proof}
% Any real number $x$ satisfies $x>0$, $x=0$, or $x<0$.
% If $x=0$, then $x^2=0\ge 0$.  If $x>0$ then as a positive time a
% positive is positive we have $x^2=xx>0$.  If $x<0$ then $-x>0$ and so
% by what we have just done $x^2=(-x)^2>0$.  So in all cases $x^2\ge0$.
% \end{proof}

%%
%% A new section is started as follows:
%%

\section{Introduction}

\hspace{6mm} Let $(M,g)$ be a compact Riemannian manifold of dimension $n\geq 2$. Given a complex-valued map $u: M\to \mathbb{R}^2$, we define for $\epsilon>0$ the Ginzburg-Landau functionals
\begin{equation}\label{gledef}
E_{\epsilon}(u):=\int_Me_{\epsilon}(u)=\int_M\frac{1}{2}|du|^2+\frac{1}{\epsilon^2}W(u).
\end{equation}
Here, $W:\mathbb{R}^2\to \mathbb{R}$ is a smooth, bounded potential satisfying
\begin{equation}\label{w1}
W(z)=\frac{1}{4}(1-|z|^2)^2  \text{ for }|z|<2,
\end{equation}
\begin{equation}\label{w2}
W(z)\geq 2 \text{ for }|z|\geq 2, 
\end{equation}
and
\begin{equation}\label{w3}
\sup_{z\in \mathbb{R}^2}|DW(z)|<\infty.
\end{equation}
Critical points $u_{\epsilon}:M\to \mathbb{R}^2$ of the energy $E_{\epsilon}$ solve the Ginzburg-Landau equation
\begin{equation}\label{eleqn}
\Delta u_{\epsilon}=\frac{1}{\epsilon^2}DW(u_{\epsilon}).
\end{equation}

\hspace{6mm} Clearly, the global minimizers of $E_{\epsilon}$ are just the constant maps taking values in the unit circle. On a bounded domain $\Omega\subset\mathbb{R}^n$, we can find more interesting solutions of (\ref{eleqn}) by minimizing $E_{\epsilon}(u)$ among maps with fixed Dirichlet data
\begin{equation}\label{dirconst}
u|_{\partial\Omega}=h_{\epsilon}.
\end{equation}
When $\Omega\subset\mathbb{R}^2$ is a simply-connected planar domain, and $h_{\epsilon}$ is a fixed map
$$h:\partial\Omega\to S^1$$
of degree $d$, the asymptotic behavior of these minimizers $u_{\epsilon}$ as $\epsilon\to 0$ was characterized by the work of Bethuel-Brezis-H\'{e}lein \cite{BBH} and Struwe \cite{St}. Namely, they showed that (along some subsequence $\epsilon_j\to 0$), there exist $|d|$ points $a_1,\ldots,a_{|d|}\in \Omega$ such that
\begin{equation}
\lim_{\epsilon\to 0}\frac{e_{\epsilon}(u_{\epsilon})}{|\log\epsilon|}dx= \pi\cdot\Sigma_{j=1}^{|d|}\delta_{a_j}\text{ in }(C^0)^*,
\end{equation}
while
\begin{equation}
u_{\epsilon} \to u\text{ in }C^{\infty}_{loc}(\Omega\setminus \{a_1,\ldots,a_{|d|}\}),
\end{equation}
where $u:\Omega\to S^1$ is a weakly harmonic map with singularities at $\{a_1,\ldots,a_{|d|}\}$ (\cite{BBH},\cite{St}).  In particular, these results establish the variational theory of $E_{\epsilon}$ as a natural means for producing singular harmonic maps to $S^1$ in situations where finite-energy solutions aren't available.

\hspace{6mm} For solutions in higher dimensions, a still richer structure emerges, with connections to geometric measure theory. (We assume here some familiarity with the basic definitions and results of geometric measure theory, as found in \cite{F} and \cite{Si}; see especially \cite{Si} for the theory of varifolds.) For domains $\Omega\subset\mathbb{R}^n$, $n\geq 3$, Lin and Rivi\`{e}re studied minimizers of $E_{\epsilon}$ under boundary conditions $h_{\epsilon}:\partial\Omega\to D^2$ that approximate a map $\partial \Omega\to S^1$ with singularity along a fixed $n-3$-dimensional submanifold $S\subset\partial\Omega$ \cite{LR}. In a striking extension of the two-dimensional results, they showed that (along a subsequence) the measures
$$\mu_{\epsilon}:=\frac{e_{\epsilon}(u_{\epsilon})}{\pi |\log\epsilon|}dx$$
converge to the weight measure $\mu_T$ of an integral $(n-2)$-current $T\in \mathcal{I}_{n-2}(\Omega)$ solving the Plateau problem
\begin{equation}\label{plateau}
\partial T=S,\text{\hspace{3mm} }{\bf M}(T)\leq {\bf M}(T+\partial W)\text{ for all } W\in \mathcal{I}_{n-1}(\Omega),
\end{equation}
while, away from $spt(T)$, $u_{\epsilon}$ again converges to a harmonic map $u:\Omega\setminus spt(T)\to S^1$ \cite{LR}. The proof of this statement doesn't rely on the existence of a solution to (\ref{plateau}), so these results yield a new existence proof for the codimension-two Plateau problem via Ginzburg-Landau functionals \cite{LR}.

\hspace{6mm} In \cite{BBO}, Bethuel, Brezis, and Orlandi employed ideas from \cite{AS} and \cite{LR} to produce similar results for non-minimizing solutions $u_{\epsilon}$ of (\ref{eleqn}) with boundary data $h_{\epsilon}$ similar to that used in \cite{LR}. For such solutions, they showed that the normalized energy measures $\mu_{\epsilon}$ concentrate on a stationary, rectifiable varifold of codimension two, away from which the maps $u_{\epsilon}$ converge smoothly to a harmonic map to $S^1$. In particular, their results give us reason to hope that the variational theory of the Ginzburg-Landau functional could be used to produce nontrivial critical points of the $(n-2)$-area functional.

\hspace{6mm} In this paper, we introduce a natural min-max procedure for the Ginzburg-Landau energies to produce solutions on an arbitrary compact manifold whose energy concentration measures $\mu_{\epsilon}$ have mass bounded above and below:

\begin{thm}\label{mainthm} On any compact Riemannian manifold $(M^n,g)$, there exists a family of nontrivial solutions $u_{\epsilon}:M\to \mathbb{R}^2$ of the Ginzburg-Landau equations $(\ref{eleqn})$ satisfying energy bounds of the form
\begin{equation}\label{cbounds}
c|\log\epsilon|\leq E_{\epsilon}(u_{\epsilon})\leq C|\log\epsilon|
\end{equation}
for some positive constants $C=C(M),$ $c=c(M)$. 
\end{thm}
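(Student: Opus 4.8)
The plan is to run a two-parameter min-max (a mountain pass over disk fillings of a loop) for $E_{\epsilon}$ on the Hilbert space $X=W^{1,2}(M,\R^2)=W^{1,2}(M,\mathbb{C})$, taking as the sweepout boundary the degenerate ``twist'' loop of constant maps $\ell\colon\partial D^2\to X$, $\ell(e^{i\theta})\equiv e^{i\theta}$, along which $E_{\epsilon}\equiv 0$ and which is contractible in $X$. Let $\mathcal{F}$ be the class of continuous $G\colon D^2\to X$ (equivalently $G\in W^{1,2}(D^2\times M,\mathbb{C})$, suitably interpreted) with $G|_{\partial D^2}=\ell$, and set
\begin{equation*}
c_{\epsilon}\;=\;\inf_{G\in\mathcal{F}}\ \sup_{z\in D^2} E_{\epsilon}\bigl(G(z,\cdot)\bigr).
\end{equation*}
Because the modified potential obeys $(\ref{w3})$, $E_{\epsilon}$ satisfies the Palais--Smale condition on $X$; since $\mathcal{F}$ is stable under deformations fixing $\partial D^2$, the standard deformation lemma shows that if $c_{\epsilon}>\sup_{\theta}E_{\epsilon}(\ell(e^{i\theta}))=0$ then $c_{\epsilon}$ is a critical value of $E_{\epsilon}$, attained by a weak solution $u_{\epsilon}$ of $(\ref{eleqn})$ with $E_{\epsilon}(u_{\epsilon})=c_{\epsilon}$, which is smooth by elliptic regularity. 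Thus everything reduces to the two estimates $c(M)|\log\epsilon|\le c_{\epsilon}\le C(M)|\log\epsilon|$, and nontriviality of $u_{\epsilon}$ follows from $E_{\epsilon}(u_{\epsilon})\ge c(M)|\log\epsilon|>0$.

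\textbf{Upper bound.} I would build a competitor $G_{\epsilon}\in\mathcal{F}$ with $\sup_z E_{\epsilon}(G_{\epsilon}(z,\cdot))\le C(M)|\log\epsilon|$ by spreading a single regularized vortex along a fixed codimension-two sweepout of $M$. Pick a smooth $\psi\colon M\to\R^2$ with $\epsilon_0\|\psi\|_{\infty}\le 1$ and with $0$ (and a.e.\ point) a regular value, so that $Z:=\{(z,x)\in D^2\times M: z=\epsilon_0\psi(x)\}$ is a smooth $n$-submanifold without boundary whose slices $Z_z=\psi^{-1}(z/\epsilon_0)$ are $(n-2)$-cycles sweeping out $M$, with $\sup_z\mathcal{H}^{n-2}(Z_z)<\infty$. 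Let $G_{\epsilon}(z,\cdot)$ equal $(z-\epsilon_0\psi)/|z-\epsilon_0\psi|$ outside the $\epsilon$-tube about $Z$ and interpolate there by the standard degree-one vortex profile; the familiar vortex energy estimate gives $E_{\epsilon}(G_{\epsilon}(z,\cdot))\le\pi|\log\epsilon|\,\mathcal{H}^{n-2}(Z_z)+O(1)\le C(M,\psi)|\log\epsilon|$ uniformly in $z$, while $G_{\epsilon}|_{\partial D^2}$ is homotopic to $\ell$ and may be reparametrized near $\partial D^2$ to coincide with it. In particular $\mathcal{F}\neq\emptyset$ and $c_{\epsilon}\le C(M)|\log\epsilon|$.

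\textbf{Lower bound (the crux).} Fix $G\in\mathcal{F}$ with $\Lambda:=\sup_z E_{\epsilon}(G(z,\cdot))<\infty$; the goal is $\Lambda\ge c(M)|\log\epsilon|$. For a.e.\ $z$ the slice $u_z:=G(z,\cdot)$ lies in $W^{1,2}(M,\mathbb{C})$ with $E_{\epsilon}(u_z)\le\Lambda$, and by the Jerrard--Soner estimate its Jacobian $\star J u_z$ defines an $(n-2)$-current $J_z$ with ${\bf M}(J_z)\le C\Lambda/|\log\epsilon|$, vanishing for $z\in\partial D^2$ since there $G$ is constant in $x$. The family $\{J_z\}_{z\in D^2}$ is then a two-parameter vortex sweepout of $M$ which, because each slice map $G(\cdot,x)\colon D^2\to\mathbb{C}$ is nonvanishing on $\partial D^2$ with winding one and hence has algebraic zero-count one, detects the fundamental class of $M$; making this precise — by approximating $G$ by smooth maps and slicing the zero-set, or by a direct current-theoretic computation of $\int_{D^2}JG$ fibered over $M$ — yields a coarea-type inequality $\int_{D^2}{\bf M}(J_z)\,d\mathcal{H}^2(z)\ge c(M)>0$, hence $\sup_z{\bf M}(J_z)\ge c'(M)$. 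Combined with the logarithmic lower bound for the Ginzburg--Landau energy in terms of Jacobian mass (Jerrard's ball construction, as used in \cite{BBO}), $E_{\epsilon}(u_z)\ge\pi(1-o(1))|\log\epsilon|\,{\bf M}(J_z)$, this gives $\Lambda\ge c(M)|\log\epsilon|$ for $\epsilon$ small, completing the proof.

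\textbf{Main obstacle.} The delicate point is precisely this lower bound: one must show the topologically trivial loop $\ell$ still cannot be filled in $X$ at energy $o(|\log\epsilon|)$, and the mechanism — that every filling forces a macroscopic $(n-2)$-dimensional vortex sweepout of $M$ — has to be run entirely with Jacobian currents of $W^{1,2}$ maps rather than zero-sets of smooth maps. Controlling the interplay between the upper Jerrard--Soner bound ${\bf M}(J_z)\lesssim E_{\epsilon}(u_z)/|\log\epsilon|$ and the lower ball-construction bound $E_{\epsilon}(u_z)\gtrsim|\log\epsilon|\,{\bf M}(J_z)$ \emph{uniformly along the family}, together with verifying the fundamental-class detection and the deformation lemma in whatever weak topology one is forced to put on $\mathcal{F}$, is where the real work lies; the Palais--Smale property and the passage to a smooth classical solution are routine consequences of $(\ref{w3})$.
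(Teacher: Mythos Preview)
Your min-max setup and upper-bound construction are essentially those of the paper (the paper builds the map $M\to\mathbb{R}^2$ via a triangulation rather than a generic smooth $\psi$, but then composes with the same translated regularized vortices and estimates via coarea). One small gap worth noting: $E_\epsilon$ does \emph{not} satisfy Palais--Smale on all of $W^{1,2}$, since $W$ is merely bounded; the paper first projects any family into the closed unit disk (this does not increase $E_\epsilon$) so that the resulting min-max sequence is bounded in $L^\infty$ and hence in $H^1$, and then invokes Palais--Smale along bounded sequences.

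The lower bound is where your proposal and the paper diverge sharply, and your version has a genuine gap. The inequality you invoke, $E_\epsilon(u_z)\ge\pi(1-o(1))\,|\log\epsilon|\,{\bf M}(J_z)$ with $J_z=\star Ju_z$, is false as stated: pointwise $|Ju|\le\tfrac12|du|^2$, so ${\bf M}(Ju)\le E_\epsilon(u)$ with no $|\log\epsilon|$ factor, and smeared-out vortices show this is sharp. Likewise the Jerrard--Soner estimate controls the \emph{flat norm} of $Ju_z$ (or its distance to an integral current), not its mass. What the ball construction and the $\Gamma$-liminf theory actually give is an inequality for the \emph{limiting} integral current $T=\lim_{\epsilon\to0}Ju_\epsilon/\pi$: namely $\liminf_{\epsilon}E_\epsilon(u_\epsilon)/|\log\epsilon|\ge\pi\,{\bf M}(T)$. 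To run your argument one would have to pass to the limit in $\epsilon$ first, show the limiting family $\{T_z\}_{z\in D^2}$ of integral $(n-2)$-cycles still varies continuously in the flat topology and still detects the fundamental class, and only then conclude. This can be made rigorous (in the spirit of \cite{JSt}), but it is substantially harder than what you have written, and is exactly the ``main obstacle'' you flag.

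The paper sidesteps all of this with a much simpler, indirect argument. Rather than bounding $c_\epsilon$ from below by analyzing arbitrary families in $\Gamma(M)$, it uses that $c_\epsilon$ is a \emph{critical value}: there is a solution $u_\epsilon$ of (\ref{eleqn}) with $E_\epsilon(u_\epsilon)=c_\epsilon>0$. When $H^1_{dR}(M)=0$, a short computation (using $d^*ju_\epsilon=0$ and exactness of $(u_\epsilon/|u_\epsilon|)^*d\theta$) shows any nontrivial solution must vanish somewhere, and the $\eta$-ellipticity theorem of \cite{BBO} applied at that zero immediately gives $E_\epsilon(u_\epsilon)\gtrsim|\log\epsilon|$. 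For general $M$, the paper observes that restricting any $F\in\Gamma(M)$ to a fixed embedded ball $B^n\subset M$ and then reflecting across the equator produces a family in $\Gamma(S^n)$ of comparable energy; hence $c_\epsilon(M)\ge C^{-1}c_\epsilon(S^n)$, and since $H^1_{dR}(S^n)=0$ the previous case applies. No Jacobian currents, no sweepout topology, no $\Gamma$-convergence --- the lower bound is reduced to properties of \emph{solutions}, which are far better behaved than generic $H^1$ maps.
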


Moreover, when $M$ has vanishing degree one cohomology $H^1_{dR}(M)=0$, we show that the analysis of \cite{BBO} can be extended to arbitrary global solutions of (\ref{eleqn}) satisfying bounds of the form (\ref{cbounds}), to conclude that

\begin{thm}\label{thm2} If $H^1_{dR}(M)=0$, then $\exists$ a subsequence $\epsilon_j\to 0$ and a nontrivial stationary, rectifiable $(n-2)$-varifold $V$ on $M$ such that 
$$\mu_{\epsilon_j}:=\frac{e_{\epsilon_j}(u_{\epsilon_j})}{|\log\epsilon_j|}dv_g\to \|V\|.$$
\end{thm}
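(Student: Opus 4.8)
The plan is to show that the energy-concentration analysis of \cite{BBO}, carried out there for solutions with prescribed boundary data on Euclidean domains, applies to \emph{any} family of global solutions of (\ref{eleqn}) satisfying (\ref{cbounds}) on a closed manifold once $H^1_{dR}(M)=0$, and then to invoke it for the min-max solutions of Theorem \ref{mainthm}. As preliminaries one notes that such $u_\epsilon$ are critical points of $E_\epsilon$, so a maximum-principle argument applied to $\Delta|u_\epsilon|^2$ gives $|u_\epsilon|\leq 1$; the stress-energy tensor $T^\epsilon_{ij}=\langle\nabla_i u_\epsilon,\nabla_j u_\epsilon\rangle-g_{ij}e_\epsilon(u_\epsilon)$ is divergence-free; and the (almost-)monotonicity formula of \cite{LR},\cite{BBO} for $r^{2-n}\int_{B_r(x)}e_\epsilon(u_\epsilon)$ holds up to a curvature error harmless on small balls. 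The local engines of \cite{BBO} — the clearing-out lemma, the $\eta$-compactness, and the small-energy $C^k$ regularity — are interior statements, so they transfer to $M$ with only such corrections; working on a closed manifold removes the boundary hypotheses of \cite{LR},\cite{BBO} and only simplifies matters.

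By the upper bound in (\ref{cbounds}) the measures $\mu_\epsilon$ have uniformly bounded mass, so along a subsequence $\mu_{\epsilon_j}\rightharpoonup\mu_*$; compactness of $M$ gives $\mu_*(M)=\lim_j E_{\epsilon_j}(u_{\epsilon_j})/|\log\epsilon_j|\in[c,C]$, so in particular $\mu_*\neq 0$. Put $\Sigma:=\{x\in M:\Theta^{n-2}_*(\mu_*,x)\geq\eta_0\}$ for the clearing-out threshold $\eta_0$. As in \cite{BBO}, clearing-out together with monotonicity and $\eta$-compactness show that $\Sigma$ is $\mathcal H^{n-2}$-rectifiable with locally finite $\mathcal H^{n-2}$-mass, that $|u_{\epsilon_j}|\to 1$ and $\int_B|du_{\epsilon_j}|^2$ stays bounded for every $B\Subset M\setminus\Sigma$, and that $\mu_*=\nu+\|V\|$, where $V$ is an $(n-2)$-rectifiable varifold carried by $\Sigma$ (the vortex part, with $\mathcal H^{n-2}$-a.e.\ density $\geq\eta_0$) and $\nu$ is a ``diffuse'' part supported off the vortex set. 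A computation with the vortex profile gives $|\log\epsilon_j|^{-1}\,\nabla u_{\epsilon_j}\otimes\nabla u_{\epsilon_j}\rightharpoonup(\mathrm{Id}-\pi_{T_x\Sigma})\|V\|+(\text{diffuse stress})$, so dividing $\operatorname{div}T^\epsilon=0$ by $|\log\epsilon_j|$ and passing to the limit yields the stationarity $\delta V=0$ \emph{provided the diffuse part $\nu$ is absent}.

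This last point is where $H^1_{dR}(M)=0$ is essential. The $S^1$-invariance of $E_\epsilon$ — equivalently, the component of (\ref{eleqn}) orthogonal to $u_\epsilon$, using $|u_\epsilon|\leq 1$ so that $DW(u_\epsilon)\parallel u_\epsilon$ — yields the conservation law $d^*j(u_\epsilon)=0$ on all of $M$, where $j(u):=u\times du$. Hence in the Hodge decomposition $j(u_\epsilon)$ has no exact part, and since $\mathcal H^1(M)\cong H^1_{dR}(M)=0$ it has no harmonic part either; so $j(u_\epsilon)=d^*\beta_\epsilon$ with $d\,j(u_\epsilon)=2\,Ju_\epsilon$, where $Ju_\epsilon=\tfrac12\,dj(u_\epsilon)$ is the Jacobian $2$-form. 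The Jacobian estimate of \cite{BBO} bounds the vorticity by the energy, $\|Ju_\epsilon\|\lesssim E_\epsilon(u_\epsilon)/|\log\epsilon|\lesssim 1$ as a measure, so elliptic estimates give $\|\beta_\epsilon\|_{W^{1,p}}\lesssim 1$ and hence $\|j(u_\epsilon)\|_{L^p}\lesssim 1$ for $p<\tfrac{n}{n-1}$ — a bound that fails in general when $H^1_{dR}(M)\neq 0$, where $j(u_\epsilon)$ may carry a harmonic component of size $\sim|\log\epsilon|^{1/2}$ and thereby contribute a genuine smooth (hence non-$(n-2)$-dimensional) term $\tfrac12|\gamma|^2\,dv_g$ to the limit. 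Granting the uniform $L^p$-bound on $j(u_\epsilon)$, interior elliptic regularity on $M\setminus\Sigma$, where the vorticity vanishes, forces $|\log\epsilon_j|^{-1}|du_{\epsilon_j}|^2\to 0$ in $L^1_{loc}(M\setminus\Sigma)$, so $\nu=0$, $\mu_*=\|V\|$, and the stress-energy limit then gives that $V$ is a \emph{stationary}, rectifiable $(n-2)$-varifold; it is nontrivial because $\|V\|(M)=\mu_*(M)\geq c>0$.

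I expect the principal obstacle to be twofold. First, transplanting the full \cite{BBO} package — clearing-out, monotonicity, $\eta$-compactness, and especially the rectifiability of $\Sigma$ and the structure of the vortex varifold $V$ — to global solutions on a closed manifold furnished only with the coarse two-sided bound (\ref{cbounds}), whereas \cite{LR},\cite{BBO} lean on the boundary data both for a priori estimates and for localizing the vorticity. Second, and more delicate, is the argument that $\mu_*$ has no diffuse part: one must run the Hodge-decomposition and Jacobian-estimate argument with enough care to separate the vortex flux $d^*\beta_\epsilon$ from the harmonic modes of $j(u_\epsilon)$, and it is precisely the vanishing of those modes — the hypothesis $H^1_{dR}(M)=0$ — that forces the limiting energy measure to be $(n-2)$-dimensional.
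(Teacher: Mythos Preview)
Your strategy is the paper's: Hodge-decompose the angular current, use $H^1_{dR}(M)=0$ to kill the harmonic component, obtain a uniform $L^p$ bound on the co-exact piece for $p<\tfrac{n}{n-1}$, and then use interior elliptic regularity on the region where $|u_\epsilon|$ is bounded away from zero to show no diffuse energy survives. The paper packages stationarity and rectifiability through the Ambrosio--Soner generalized-varifold framework rather than your direct split $\mu_*=\nu+\|V\|$: this has the advantage that stationarity passes to the limit automatically from $\operatorname{div}T^\epsilon=0$, and rectifiability is reduced to a single positive-density lemma via \cite{AS}, Theorem~3.8. Your assertion that clearing-out and monotonicity alone give \emph{rectifiability} of $\Sigma$ is too fast---they give finite $\mathcal{H}^{n-2}$-measure and a density lower bound, but rectifiability itself requires the Ambrosio--Soner (or Allard--Preiss-type) step.

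The genuine gap is your Jacobian estimate. The bound $\|Ju_\epsilon\|\lesssim E_\epsilon(u_\epsilon)/|\log\epsilon|$ does \emph{not} hold in total variation: writing $Ju_\epsilon=d|u_\epsilon|\wedge (ju_\epsilon/|u_\epsilon|)$, Cauchy--Schwarz together with the modulus estimate $\int_M|d|u_\epsilon||^2\leq C$ of \cite{BOS} only gives $\|Ju_\epsilon\|_{L^1}\lesssim|\log\epsilon|^{1/2}$, which does not feed into the $L^1\to L^p$ elliptic estimate you invoke. (Jerrard--Soner-type bounds control $Ju_\epsilon$ only in weak dual norms like $(C^{0,1})^*$, not as a measure.) The paper's fix, following \cite{BBO}, is to replace $ju_\epsilon$ by the truncated current $\gamma_\epsilon:=f(|u_\epsilon|^2)\,ju_\epsilon$ with $f(t)=1/t$ for $t\geq 3/4$: then $\gamma_\epsilon=j(u_\epsilon/|u_\epsilon|)$ is closed on $\{|u_\epsilon|^2\geq 3/4\}$, so $d\gamma_\epsilon$ is supported on a set of volume $\leq C\epsilon^2$ while $|d\gamma_\epsilon|\leq C\epsilon^{-2}$ pointwise by the Bochner bound---yielding the crucial $\|d\gamma_\epsilon\|_{L^1}\leq C$ and hence $\|d^*\xi_\epsilon\|_{L^p}\leq C_p$ by duality. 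The price is that $\gamma_\epsilon$ is no longer co-closed, so its Hodge decomposition $\gamma_\epsilon=d\theta_\epsilon+d^*\xi_\epsilon$ acquires a nontrivial exact piece, and one must separately prove (via a coarea-and-level-set argument) that $\|d\theta_\epsilon\|_{L^2}^2\leq C|\log\epsilon|^{1/2}=o(|\log\epsilon|)$, so that only $d^*\xi_\epsilon$ contributes to $\mu$. With this correction your outline becomes the paper's proof.
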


\begin{remark}\label{harmex}
When $H^1_{dR}(M)\neq 0$, it is no longer true that bounds of the form (\ref{cbounds}) yield compactness results for the solutions $u_{\epsilon}$, or $(n-2)$-rectifiability of the energy concentration measure. For instance, consider $M=S^1\times N$ endowed with the product metric, let $\epsilon_k=e^{-k^2}$, and let $u_{\epsilon_k}:M\to\mathbb{C}$ be given by $u_{\epsilon_k}(z,y)=(1-k^2\epsilon_k^2)^{1/2}z^k$. These $u_{\epsilon_k}$ then solve $(GL)_{\epsilon_k}$ with energy bounds of the form (\ref{cbounds}), but the energy concentration measures $\mu_{\epsilon_k}$ converge to a multiple of the volume measure $dv_g$ as $k\to \infty$. We nonetheless expect that the conclusion of Theorem \ref{thm2} will hold for our min-max solutions when $H^1_{dR}(M)\neq 0$, but such a result will necessarily rely in a nontrivial way on the min-max construction.
\end{remark}

\begin{remark}
As in \cite{BBO}, after establishing a positivity result for the $(n-2)$-density of the limiting measure, the concentration of energy on an $(n-2)$-rectifiable varifold in Theorem \ref{thm2} follows from Ambrosio and Soner's blow-up argument in \cite{AS}. In particular, it does not follow from our analysis that $V$ has integer density $\mathcal{H}^{n-2}$-a.e. For applications to geometric measure theory, it would be very interesting to extend the integrality results of \cite{LR} for minimizers to the min-max solutions constructed here.
\end{remark}

\hspace{6mm} These results are inspired in large part by Guaraco's min-max program for the elliptic Allen-Cahn equation--the scalar analog of (\ref{eleqn}) \cite{Gu}. Building on results of Hutchinson-Tonegawa \cite{HT} and Tonegawa-Wickramasekera \cite{TW}, it was shown in \cite{Gu} that real-valued solutions of (\ref{eleqn}) arising from a natural mountain-pass construction exhibit energy blow-up on a stationary, integral $(n-1)$-varifold, with singular set of Hausdorff dimension $\leq n-8$. In particular, the analysis in \cite{Gu} recovers the major results of the Almgren-Pitts min-max construction of minimal hypersurfaces \cite{A}, \cite{P}, while replacing a number of the original geometric measure theory arguments with (often simpler) pde methods. 

\hspace{6mm} The conclusions of \cite{Gu} are particularly intriguing in light of recent applications of the min-max theory of minimal hypersurfaces to some long-standing problems in geometry, such as Marques and Neves's resolution of the Willmore Conjecture \cite{MN2}, or their proof that manifolds of positive Ricci curvature contain infinitely many minimal hypersurfaces \cite{MN1}. As a natural regularization of the Almgren-Pitts theory, the Allen-Cahn min-max serves as a bridge between these kinds of results and questions in semilinear pde. In \cite{GG}, for example, Gaspar and Guaraco draw on this relationship by adapting the arguments in \cite{MN1} to the Allen-Cahn setting, obtaining a number of new results about the solution space of semilinear pdes of this type.

\hspace{6mm} Our results here suggest that the min-max theory of Ginzburg-Landau functionals may provide a similar regularization for the Almgren-Pitts min-max in codimension two. To this end, it would be desirable to extend our results by removing the cohomological constraint in Theorem \ref{thm2} and investigating the integrality of the energy-concentration varifold. Da Rong Cheng has informed us that he has independently obtained the result of Theorem \ref{thm2}.

\section*{Acknowledgements} I would like to thank my advisor Fernando Cod\'{a} Marques for his constant support and many helpful conversations. The author is partially supported by NSF grants DMS-1502424 and DMS-1509027.

%%
%% A subsection is started by
%%

%\subsection{Blah}

\section{The Min-Max Procedure}

\hspace{6mm} Our basic method for constructing critical points of $E_{\epsilon}$ is a natural extension to codimension two of Guaraco's mountain pass construction in \cite{Gu}. Namely, we employ a simple two-parameter min-max procedure (following the presentation in \cite{Gh}) to obtain nontrivial critical points of $E_{\epsilon}$ on $M$.

\hspace{6mm} Let $\varphi$ be a $C^1$ functional on a Banach space $X$, and suppose $X$ splits into a sum
$$X=Y\oplus Z,$$
where $\dim(Y)=k<\infty$.  
Denote by $B_Y$ the closed unit $k$-ball
$$B_Y:=\{u\in Y \mid \|u\|\leq 1\},$$
and let 
$$S_Y:=\{u\in Y\mid \|u\|=1\}$$ 
be its boundary $(k-1)$-sphere. Let $\Gamma$ be the collection of maps
\begin{equation}
\Gamma:=\{F\in C^0(B_Y,X) \mid F|_{S_Y}=Id|_{S_Y}\},
\end{equation}
and $c$ the associated min-max constant
\begin{equation}
c:=\inf_{F \in \Gamma}\max_{y\in B_Y}\varphi(F(y)).
\end{equation}

\hspace{6mm }For any family $F \in \Gamma$, given a projection $P_Y:X\to Y$, we can apply elementary degree theory to the map
$$P_Y\circ F: B_Y\to Y$$
to conclude that $P_Y\circ F$ must vanish somewhere, so that $F(y)\in Z$ for some $y \in B_Y$. If we have also an estimate of the form
\begin{equation}\label{zbarrier}
\inf \varphi(Z)>\sup \varphi(S_Y),
\end{equation}
it then follows from general versions of the min-max theorem (e.g., Theorem 3.2 in \cite{Gh}) that 

\begin{thm}\label{minmaxthm} For any sequence $F_j\in \Gamma$ such that
\begin{equation}\lim_{j\to \infty}\sup_{y\in B_Y}\varphi(F_j(y))=c,
\end{equation}
there exists a sequence $u_j\in X$ such that
\begin{equation}\label{ps1}
\lim_{j\to \infty}\varphi(u_j)=c,
\end{equation}
\begin{equation}\label{ps2}
\lim_{j\to\infty}\|d\varphi(u_j)\|=0,
\end{equation}
and
\begin{equation}\label{ps3}
\lim_{j\to\infty}dist(u_j,F_j(B_Y))=0.
\end{equation}
\end{thm}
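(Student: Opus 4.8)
The plan is to deduce Theorem~\ref{minmaxthm} from the abstract min-max principle of Ghoussoub \cite{Gh}: once one knows that $\Gamma$ is nonempty and stable under continuous deformations of $X$ fixing $S_Y$, and that the min-max level $c$ lies strictly above $\sup\varphi(S_Y)$, the existence of a Palais--Smale sequence satisfying (\ref{ps1})--(\ref{ps3}) near the images $F_j(B_Y)$ is precisely (a mild variant of) Theorem~3.2 in \cite{Gh}. Thus the task is to check these two hypotheses and to recall the mechanism behind the conclusion.

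First I would record the structural facts. The inclusion $\iota:B_Y\hookrightarrow X$ lies in $\Gamma$, and $B_Y$ is compact (since $\dim Y<\infty$), so $c\leq\max_{y\in B_Y}\varphi(y)<\infty$ and $\Gamma\neq\emptyset$; moreover, if $\eta\in C^0(X,X)$ restricts to the identity on $S_Y$ and $F\in\Gamma$, then $\eta\circ F\in\Gamma$, so $\Gamma$ is stable under such deformations. For the separation I would combine the intersection property with the barrier (\ref{zbarrier}): for each $F\in\Gamma$ the degree argument sketched above produces $y^{\ast}\in B_Y$ with $F(y^{\ast})\in Z$, whence $\max_{y\in B_Y}\varphi(F(y))\geq\inf\varphi(Z)$, and taking the infimum over $F\in\Gamma$ gives
\[
c\ \geq\ \inf\varphi(Z)\ >\ \sup\varphi(S_Y);
\]
write $2\varepsilon_0:=c-\sup\varphi(S_Y)>0$.

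The heart of the argument is then a standard Ekeland-plus-deformation scheme. Equip $\Gamma$ with the complete metric $d(F,G):=\sup_{y\in B_Y}\|F(y)-G(y)\|$; the functional $I(F):=\max_{y\in B_Y}\varphi(F(y))$ is lower semicontinuous, bounded below, and satisfies $\inf_{\Gamma}I=c$. Given a min-max sequence $F_j$, put $\lambda_j:=I(F_j)-c\to 0$ and apply Ekeland's variational principle to obtain $G_j\in\Gamma$ with $I(G_j)\leq I(F_j)$, $d(G_j,F_j)\leq\sqrt{\lambda_j}$, and $I(G)\geq I(G_j)-\sqrt{\lambda_j}\,d(G,G_j)$ for every $G\in\Gamma$. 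Writing $K_j:=\{y\in B_Y:\varphi(G_j(y))=I(G_j)\}$ for the (nonempty, compact) maximum set, one shows $\inf_{y\in K_j}\|d\varphi(G_j(y))\|\to 0$: otherwise, for infinitely many $j$ one has $\|d\varphi\|\geq\beta>0$ on a neighborhood of $G_j(K_j)$, and a locally Lipschitz pseudo-gradient field for $\varphi$ furnishes a deformation supported in a thin tube around $G_j(K_j)$ at energy levels within $\varepsilon_0$ of $I(G_j)$ --- hence, since $\sup\varphi(S_Y)<c-\varepsilon_0\leq I(G_j)-\varepsilon_0$, supported away from $G_j(S_Y)=S_Y$, so that composing with $G_j$ stays in $\Gamma$ --- which lowers $I$ by a fixed positive amount while displacing $G_j$ only a bounded $d$-distance, contradicting the Ekeland inequality once $j$ is large. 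Choosing $y_j\in K_j$ with $\|d\varphi(G_j(y_j))\|\to 0$ and setting $u_j:=G_j(y_j)$, we obtain $\varphi(u_j)=I(G_j)\in[c,I(F_j)]\to c$, $\|d\varphi(u_j)\|\to 0$, and $\dist(u_j,F_j(B_Y))\leq\|G_j(y_j)-F_j(y_j)\|\leq d(G_j,F_j)\to 0$, which are exactly (\ref{ps1})--(\ref{ps3}).

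The step I expect to be the main obstacle is the localized deformation inside that contradiction: one must balance the rate of descent of $\varphi$ along the pseudo-gradient flow (of order $\|d\varphi\|$) against the $d$-displacement it creates, and, more delicately, arrange that not only the maximal points of $\varphi\circ G_j$ but also all the nearly-maximal ones get pushed down, so that the supremum defining $I$ --- a sup over the compact set $B_Y$ --- strictly decreases; it is precisely in keeping the deformed family inside $\Gamma$ that the barrier hypothesis (\ref{zbarrier}) is used. Since all of this is carried out in the required generality in \cite{Gh}, in the final write-up it suffices to isolate the two hypotheses above and invoke the cited result.
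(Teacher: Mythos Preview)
Your approach is essentially identical to the paper's: the paper does not give an independent proof of this theorem but simply states it as a consequence of Theorem~3.2 in \cite{Gh}, having already indicated the degree-theoretic intersection property and the barrier estimate~(\ref{zbarrier}). You verify the same hypotheses more explicitly and additionally sketch the Ekeland-plus-deformation mechanism underlying Ghoussoub's result, but this is elaboration rather than a different route.
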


\hspace{6mm} It's not difficult to see how the Ginzburg-Landau energy $E_{\epsilon}$ fits into this framework. By our assumptions on the structure of $W$, $E_{\epsilon}$ is a $C^1$ functional on the Sobolev space $H^1(M,\mathbb{R}^2)$, with derivative $E_{\epsilon}'$ given by
$$\langle E_{\epsilon}'(u), v\rangle=\int_M\langle du,dv\rangle+\epsilon^{-2}\langle DW(u),v\rangle.$$
If we consider the natural splitting
$$H^1(M,\mathbb{R}^2)=\mathbb{R}^2\oplus Z$$
of $H^1(M,\mathbb{R}^2)$ into the constant maps (identified with $\mathbb{R}^2$) and the orthogonal complement
$$Z:=\{u \in H^1(M,\mathbb{R}^2)\mid \int_Mu=0\in \mathbb{R}^2\},$$
then we note that the unit circle $S^1\subset\mathbb{R}^2$ in the $\mathbb{R}^2$ factor is precisely the subset of $H^1(M,\mathbb{R}^2)$ on which $E_{\epsilon}$ vanishes. Thus, to apply Theorem \ref{minmaxthm} to obtain a nice min-max sequence for $E_{\epsilon}$, it is enough to establish an estimate of the form (\ref{zbarrier}): namely, we need to show that
\begin{equation}\label{zbarrier2}
\inf_{u\in Z}E_{\epsilon}(u)>0.
\end{equation}

\hspace{6mm} Such an estimate is easy to obtain: The Poincar\'{e} inequality furnishes us with a constant $\lambda_1(M)>0$ such that
$$\int_M|du|^2\geq \lambda_1(M)\int_M|u|^2\text{ for all }u\in Z;$$
hence, for any $u \in Z$, we find that
\begin{eqnarray*}
E_{\epsilon}(u)&=&\int_M\frac{|du|^2}{2}+\frac{W(u)}{\epsilon^2}\\
&\geq &\int_M \frac{\lambda_1}{2}|u|^2+\frac{W(u)}{\epsilon^2}\\
&\geq &\int_{\{|u|\geq 1/2\}}\frac{\lambda_1}{2}|u|^2+\int_{\{|u|<1/2\}}\frac{W(u)}{\epsilon^2}\\
&\geq &\min \{\frac{\lambda_1(M)}{8},\frac{W(1/2)}{\epsilon^2}\}\cdot \frac{1}{2}vol(M)>0.
\end{eqnarray*}

\hspace{6mm} Thus, (\ref{zbarrier2}) holds, and we are indeed in a position to apply the min-max theorem \ref{minmaxthm}. That is, letting $D\subset\mathbb{R}^2$ denote the closed unit disk, and setting
\begin{equation}\label{gammadef}
\Gamma(M):=\{F\in C^0(D, H^1(M,\mathbb{R}^2))\mid F(y)\equiv y\text{ for }y\in S^1\},
\end{equation}
and
\begin{equation}\label{cdef}
c_{\epsilon}(M):=\inf_{F\in \Gamma(M)}\max_{y\in D}E_{\epsilon}(F(y)),
\end{equation}
we can extract from any minimizing sequence of families
\begin{equation}\label{minfam}
F_j\in \Gamma(M),\text{\hspace{3mm}}\lim_{j\to\infty}\max_{y\in D}E_{\epsilon}(F_j(y))=c_{\epsilon}
\end{equation}
a min-max sequence $u_j$ satisfying (\ref{ps1})-(\ref{ps3}). 

\hspace{6mm} Given any family $F \in \Gamma(M)$, we can apply the nearest-point retraction $\Phi: \mathbb{R}^2\to D$ to obtain a new family $\tilde{F}:=\Phi\circ F\in \Gamma$; it is clear that $Lip(\Phi)=1$ and $W\circ \Phi \leq W$, and therefore
$$E_{\epsilon}(\tilde{F}(y))\leq E_{\epsilon}(F(y))\text{ for each }y\in D.$$
In particular, starting from any minimizing sequence of families $F_j$ as in (\ref{minfam}), we can apply $\Phi$ to obtain a new minimizing sequence $\tilde{F}_j$ satisfying
\begin{equation}\label{modbound}
\|\tilde{F}_j(y)\|_{\infty}\leq 1.
\end{equation}
If $u_j\in H^1(M,\mathbb{R}^2)$ is a min-max sequence satisfying (\ref{ps1})-(\ref{ps3}) with respect to $\tilde{F}_j$, then the bound (\ref{modbound}), together with (\ref{ps3}), implies that $u_j$ is bounded in $L^2$, and since 
$$\lim_{j\to\infty}\frac{1}{2}\int_M|du_j|^2\leq \lim_j E_{\epsilon}(u_j)=c_{\epsilon},$$
it follows that $u_j$ is bounded in the full $H^1$ norm. 

\hspace{6mm} It is a simple and well known fact (see, e.g., \cite{Gu}, \cite{JSt}) that functionals of Ginzburg-Landau type satisfy the Palais-Smale condition along bounded sequences: that is, if 
$$\sup_j\|u_j\|_{H^1}<\infty\text{ and }\lim_{j\to\infty}\|E_{\epsilon}'(u_j)\|=0,$$
then $u_j$ contains a strongly convergent subsequence $u_j\to u$, whose limit necessarily satisfies
$$E_{\epsilon}'(u)=0\text{ and }E_{\epsilon}(u)=\lim_{j\to\infty}E_{\epsilon}(u_j).$$
Applying this fact to the min-max sequence of the previous paragraph, we obtain our basic existence result:

\begin{prop}\label{existence} For any $\epsilon>0$, there exists a critical point $u_{\epsilon}\in H^1(M,\mathbb{R}^2)$ of $E_{\epsilon}$ such that
\begin{equation}\label{minmaxenergy}
E_{\epsilon}(u_{\epsilon})=c_{\epsilon}(M)>0,
\end{equation}
and
\begin{equation}\label{modbound2}
\|u_{\epsilon}\|_{\infty}\leq 1.
\end{equation}
\end{prop}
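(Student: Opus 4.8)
The plan is to obtain $u_\epsilon$ by a direct application of the abstract min-max theorem (Theorem~\ref{minmaxthm}) to $\varphi = E_\epsilon$ on $X = H^1(M,\mathbb{R}^2)$, with the splitting $X = Y \oplus Z$ in which $Y = \mathbb{R}^2$ is the space of constant maps. First I would verify the hypotheses of Theorem~\ref{minmaxthm}: the barrier estimate $\inf \varphi(Z) > \sup \varphi(S_Y)$ is exactly (\ref{zbarrier2}) together with the fact that $E_\epsilon \equiv 0$ on $S^1 = S_Y$, both already recorded above; and the degree-theoretic linking (that every $F \in \Gamma(M)$ maps some point of $D$ into $Z$) is the elementary observation about $P_Y \circ F$ noted in the text. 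Consequently, starting from any minimizing sequence of families in $\Gamma(M)$, Theorem~\ref{minmaxthm} produces a Palais--Smale sequence at the level $c_\epsilon(M)$; and the linking immediately gives $c_\epsilon(M) \geq \inf_{u \in Z} E_\epsilon(u) > 0$, so the positivity in (\ref{minmaxenergy}) will hold once we realize $c_\epsilon$ as the energy of an actual critical point.

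The next step is to arrange that the Palais--Smale sequence is bounded in $H^1$, which is where the nearest-point retraction $\Phi : \mathbb{R}^2 \to D$ enters. As observed above, applying $\Phi$ to a minimizing sequence $F_j$ yields a new minimizing sequence $\tilde F_j = \Phi \circ F_j \in \Gamma(M)$ with $\|\tilde F_j(y)\|_\infty \leq 1$ for all $y \in D$. Running Theorem~\ref{minmaxthm} with respect to $\tilde F_j$, I take the resulting sequence $u_j$ satisfying (\ref{ps1})--(\ref{ps3}); then (\ref{ps3}) furnishes points $v_j \in \tilde F_j(D)$ with $\|u_j - v_j\|_{H^1} \to 0$, so the uniform bound $\|v_j\|_{L^2} \leq \mathrm{vol}(M)^{1/2}$ coming from (\ref{modbound}) forces $\sup_j \|u_j\|_{L^2} < \infty$, while $\tfrac12 \|du_j\|_{L^2}^2 \leq E_\epsilon(u_j) \to c_\epsilon$ bounds the Dirichlet part; hence $\sup_j \|u_j\|_{H^1} < \infty$.

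With this bound in hand I would invoke the Palais--Smale property of Ginzburg--Landau functionals along bounded sequences (the cited fact): passing to a subsequence, $u_j \to u_\epsilon$ strongly in $H^1$, with $E_\epsilon'(u_\epsilon) = 0$ and $E_\epsilon(u_\epsilon) = \lim_j E_\epsilon(u_j) = c_\epsilon(M) > 0$. In particular $u_\epsilon$ is not one of the constant maps into $S^1$ on which $E_\epsilon$ vanishes, so it is a nontrivial critical point and (\ref{minmaxenergy}) holds. Finally, for (\ref{modbound2}): since $DW$ is smooth and bounded, elliptic bootstrapping on $\Delta u_\epsilon = \epsilon^{-2} DW(u_\epsilon)$ gives $u_\epsilon \in C^\infty(M,\mathbb{R}^2)$, and applying the maximum principle to $f = |u_\epsilon|^2$ via $\tfrac12 \Delta f = |du_\epsilon|^2 + \epsilon^{-2}\langle DW(u_\epsilon), u_\epsilon\rangle$, using that $\langle DW(z), z\rangle \geq 0$ for $|z| \geq 1$ (forced by (\ref{w1}) on $1 \leq |z| < 2$, and compatible with (\ref{w3}) beyond), yields $\|u_\epsilon\|_\infty \leq 1$.

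There is no deep obstacle here: the construction is a routine assembly of the abstract min-max theorem, the barrier estimate already proved, the retraction trick, and the standard Palais--Smale condition. The points that require care are (i) the retraction step is genuinely needed rather than cosmetic, since Theorem~\ref{minmaxthm} delivers only an abstract Palais--Smale sequence while the Ginzburg--Landau Palais--Smale condition is available only for $H^1$-bounded sequences; and (ii) the pointwise bound $\|u_\epsilon\|_\infty \leq 1$ depends on the structure of $W$ outside the unit disk, so one should make sure $W$ has been chosen with $\langle DW(z), z\rangle \geq 0$ for $|z| \geq 1$, which is exactly what closes the maximum principle argument.
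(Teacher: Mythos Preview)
Your argument is correct and follows essentially the same route as the paper: verify the barrier estimate, apply the abstract min-max theorem, use the retraction $\Phi$ to force an $L^\infty$ bound on the families and hence an $H^1$ bound on the Palais--Smale sequence, then invoke the bounded Palais--Smale property to pass to a critical point at level $c_\epsilon$. The one place where you are more explicit than the paper is the bound $\|u_\epsilon\|_\infty \leq 1$: the paper simply asserts it as part of the proposition, while you supply the maximum-principle argument and correctly flag that it relies on $\langle DW(z),z\rangle \geq 0$ for $|z|\geq 1$, which is guaranteed by (\ref{w1}) on $1\leq |z|<2$ but must be imposed (harmlessly, since $W$ is at our disposal there) for $|z|\geq 2$.
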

To prove Theorem \ref{mainthm}, it remains to establish the energy estimates
$$0<\liminf_{\epsilon\to 0}\frac{c_{\epsilon}(M)}{|\log\epsilon|}\leq \limsup_{\epsilon\to 0}\frac{c_{\epsilon}(M)}{|\log\epsilon|}<\infty.$$

\section{Lower Bounds on the Energies}

\hspace{6mm} Since we've shown that the min-max constants $c_{\epsilon}(M)$ are positive critical values of the energy $E_{\epsilon}$, one obvious way to obtain lower bounds for $c_{\epsilon}(M)$ is to find lower bounds for the energy of arbitrary nontrivial solutions of (\ref{eleqn}). Simple examples show, however, that such estimates will not in general yield lower bounds of the desired form. 

\hspace{6mm} Consider, for instance, $M=S^1\times N$ endowed with the product metric, and let $p:M\to S^1$ be the obvious projection. For each $\epsilon\in (0,1)$, it's easy to check that the maps
$$p_{\epsilon}=(1-\epsilon^2)^{\frac{1}{2}}\cdot p$$
satisfy (\ref{eleqn}), while their energies $E_{\epsilon}(p_{\epsilon})$ stay uniformly bounded as $\epsilon\to 0$. (As an aside, we note that the maps $p_{\epsilon}$ also satisfy $\int_Mp_{\epsilon}=0$, so, in contrast to the situation for the Allen-Cahn min-max \cite{Gu}, we can't hope to establish the desired energy blow-up by proving lower bounds for $E_{\epsilon}$ over maps of zero average.)

\hspace{6mm} The problem in the example above comes from the existence of a nontrivial harmonic map $M\to S^1$. Recall that (modulo rotation) smooth harmonic maps to $S^1$ are in one-to-one correspondence with harmonic one-forms representing integer cohomology classes in $H^1_{dR}(M)$. In particular, when $H^1_{dR}(M)=0$ there are no nontrivial harmonic maps $M\to S^1$, and in this case, we find the following:

\begin{lem}\label{toplem} If $H_{dR}^1(M)=0$, then for any family $u_{\epsilon}$ of nontrivial solutions of $(\ref{eleqn})$, we have the lower energy bound
\begin{equation}\label{goodbound}
\liminf_{\epsilon\to 0}\frac{E_{\epsilon}(u_{\epsilon})}{|\log\epsilon|}>0.
\end{equation}
\end{lem}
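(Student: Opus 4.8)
The plan is to argue by contradiction, using the $\eta$-ellipticity machinery behind \cite{BBO} to rule out vortices for low-energy solutions, and then using the hypothesis $H^1_{dR}(M)=0$ to force a vortexless solution to be a constant map into $S^1$. Concretely, suppose $(\ref{goodbound})$ fails; then there is a sequence $\epsilon_j\to 0$ along which $E_{\epsilon_j}(u_{\epsilon_j})/|\log\epsilon_j|\to 0$, i.e.\ $E_{\epsilon_j}(u_{\epsilon_j})=o(|\log\epsilon_j|)$, and I will show that $u_{\epsilon_j}$ must be a constant map into $S^1$ for all large $j$, contradicting the assumption that the $u_{\epsilon_j}$ are nontrivial.

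The first (and only substantial) step is to invoke the clearing-out / $\eta$-ellipticity lemma from the analysis of \cite{BBO} (cf.\ also \cite{LR}): there is a dimensional constant $\eta_0>0$ such that if $u_\epsilon$ solves $(\ref{eleqn})$ on a geodesic ball $B_{2r}(x)\subset M$ with $\epsilon$ small relative to $r$ and $r^{2-n}\int_{B_{2r}(x)}e_\epsilon(u_\epsilon)\le\eta_0|\log\epsilon|$, then $|u_\epsilon|\ge\frac{1}{2}$ on $B_r(x)$. I would fix a small radius $r$, cover $M$ by finitely many balls $B_r(x_i)$, and observe that since $E_{\epsilon_j}(u_{\epsilon_j})=o(|\log\epsilon_j|)$ we have $r^{2-n}\int_{B_{2r}(x_i)}e_{\epsilon_j}(u_{\epsilon_j})\le r^{2-n}E_{\epsilon_j}(u_{\epsilon_j})\le\eta_0|\log\epsilon_j|$ for all large $j$; hence $|u_{\epsilon_j}|\ge\frac{1}{2}$ on all of $M$ once $j$ is large.

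Next I would reduce to the modulus. Set $\rho_j:=|u_{\epsilon_j}|\ge\frac{1}{2}$ and consider the globally defined smooth $1$-form $\alpha_j:=\rho_j^{-2}\big(u_{\epsilon_j}^1\,du_{\epsilon_j}^2-u_{\epsilon_j}^2\,du_{\epsilon_j}^1\big)$. On any small ball, writing $u_{\epsilon_j}=\rho_j e^{i\varphi_j}$, one has $\alpha_j=d\varphi_j$, so $\alpha_j$ is closed; since $H^1_{dR}(M)=0$ we may write $\alpha_j=d\psi_j$ for some $\psi_j\in C^\infty(M)$. On the other hand, taking the $\mathbb{R}^2$ vector product of $u_{\epsilon_j}$ with $(\ref{eleqn})$ annihilates the right-hand side and gives $u_{\epsilon_j}^1\Delta u_{\epsilon_j}^2-u_{\epsilon_j}^2\Delta u_{\epsilon_j}^1=0$, i.e.\ $d^*(\rho_j^2\alpha_j)=0$. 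Pairing this against $\psi_j$ and integrating by parts yields $\int_M\rho_j^2|\alpha_j|^2=\int_M\langle\rho_j^2\alpha_j,\,d\psi_j\rangle=0$, and since $\rho_j^2\ge\frac{1}{4}$ this forces $\alpha_j\equiv 0$. Thus the phase of $u_{\epsilon_j}$ is locally constant, so by connectedness of $M$ we get $u_{\epsilon_j}/|u_{\epsilon_j}|\equiv a_j$ for a fixed $a_j\in S^1$.

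Finally, with $u_{\epsilon_j}=\rho_j a_j$ — and recalling the standard maximum-principle bound $\|u_{\epsilon_j}\|_\infty\le 1$ for solutions of $(\ref{eleqn})$ (using $(\ref{w1})$), so that $(\ref{eleqn})$ reads $\Delta u_{\epsilon_j}=\epsilon_j^{-2}(|u_{\epsilon_j}|^2-1)u_{\epsilon_j}$ — the equation becomes $\Delta\rho_j=\epsilon_j^{-2}(\rho_j^2-1)\rho_j$ on the closed manifold $M$. Integrating over $M$ gives $\int_M(\rho_j^2-1)\rho_j=0$, while $\frac{1}{2}\le\rho_j\le 1$ makes the integrand pointwise nonpositive; hence $(\rho_j^2-1)\rho_j\equiv 0$, and since $\rho_j>0$ we conclude $\rho_j\equiv 1$. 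Therefore $u_{\epsilon_j}\equiv a_j\in S^1$, which has $E_{\epsilon_j}(u_{\epsilon_j})=0$, contradicting nontriviality. I expect the clearing-out lemma of the first step to be the crux: it is the only place where the $|\log\epsilon|$-scale enters and the only nonsoft ingredient, while the hypothesis $H^1_{dR}(M)=0$ enters precisely to exclude a nonconstant harmonic phase once the vortex set has been cleared out — so that, as the example preceding the statement suggests, the sole obstruction would be the existence of a nontrivial harmonic map $M\to S^1$.
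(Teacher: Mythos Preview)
Your proof is correct and uses essentially the same ingredients as the paper's: the $\eta$-ellipticity theorem of \cite{BBO}, the divergence-free one-form $ju_\epsilon$, and the hypothesis $H^1_{dR}(M)=0$ to kill the phase. The only difference is one of logical orientation: the paper proceeds directly, first showing (via the Hodge-theoretic argument you give) that any nontrivial solution must have a zero $x_\epsilon$, and then applying $\eta$-ellipticity once at that zero with a fixed radius $\delta_0$ to obtain $E_\epsilon(u_\epsilon)\geq \delta_0^{n-2}\eta_0(|\log\epsilon|-|\log\delta_0|)$; you run the contrapositive, applying $\eta$-ellipticity over a finite cover to clear out the vortices and then invoking the Hodge argument. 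The paper's ordering is marginally cleaner (a single application of $\eta$-ellipticity rather than one per ball in a cover, and it yields the standalone fact that nontrivial solutions vanish somewhere), while your final modulus step---integrating $\Delta\rho_j$ over $M$ and using the sign of $(\rho_j^2-1)\rho_j$---is a touch simpler than the paper's version, which multiplies by $(1-|u_\epsilon|)$ and integrates.
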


\begin{proof} To begin, we show that any nontrivial solution $u_{\epsilon}$ must vanish somewhere. To see this, suppose $u_{\epsilon}$ solves (\ref{eleqn}), and that
\begin{equation}\label{nonvanishing}
|u_{\epsilon}(x)|>0\text{ for all }x\in M.
\end{equation}
Let $ju_{\epsilon}$ denote the pull-back of the one-form $r^2d\theta\in \Omega^1(\mathbb{R}^2)$ by $u_{\epsilon}$--i.e.,
\begin{equation}\label{jdef}
ju_{\epsilon}:=u_{\epsilon}^1du_{\epsilon}^2-u_{\epsilon}^2du_{\epsilon}^1.
\end{equation}
Computing the divergence of $ju_{\epsilon}$ and applying (\ref{eleqn}), we arrive at
\begin{equation}\label{divj}
d^*ju_{\epsilon}=0,
\end{equation}
a fundamental fact for solutions of (\ref{eleqn}). That is, for any $\psi \in C^{\infty}(M)$, we have
\begin{equation}\label{intdivfree}
\int_M\langle ju_{\epsilon},d\psi\rangle=0.
\end{equation}

\hspace{6mm} In light of (\ref{nonvanishing}), consider the smooth map 
$$\phi:=\frac{u_{\epsilon}}{|u_{\epsilon}|}:M\to S^1,$$
and observe that the pullback $\phi^*d\theta$ is a closed one-form; hence, by our assumption on the cohomology of $M$, there exists some $\psi\in C^{\infty}(M)$ such that 
\begin{equation}\label{exact}
\phi^*d\theta=d\psi.
\end{equation}
On the other hand, we also note that
$$\phi^*d\theta=\phi^*(r^2d\theta)=|u_{\epsilon}|^{-2}ju_{\epsilon},$$
so that applying (\ref{intdivfree}) to (\ref{exact}) yields
$$\int_M|u_{\epsilon}|^2|d\psi|^2=0.$$
Thus, $|d\phi|=|\phi^*d\theta|=0,$ so that $\phi\equiv \beta$ for some constant $\beta\in S^1$.

\hspace{6mm} It then follows from (\ref{eleqn}) that
$$\Delta(1-|u_{\epsilon}|)=\epsilon^{-2}(1-|u_{\epsilon}|^2)|u_{\epsilon}|.$$
Multiplying both sides by $(1-|u_{\epsilon}|)$ and integrating yields
$$0\geq-\int_M|d(1-|u_{\epsilon}|)|^2=\epsilon^{-2}\int_M(1-|u_{\epsilon}|)^2|u_{\epsilon}|(1+|u_{\epsilon}|)\geq 0,$$
and we immediately conclude that $|u_{\epsilon}|\equiv 1;$ hence, $u_{\epsilon}\equiv \beta\in S^1$ is a trivial solution.

\hspace{6mm} Thus, if $u_{\epsilon}$ is a nontrivial solution of (\ref{eleqn}) on $M$ with $H^1_{dR}(M)=0$, there must be some point $x_{\epsilon} \in M$ such that $u_{\epsilon}(x_{\epsilon})=0$. Now we appeal to one of the central analytical lemmas of \cite{BBO} (see also \cite{LR})--the so-called $\eta$-ellipticity theorem--to see that the existence of such a zero necessarily produces the desired energy blow up. Though the $\eta$-ellipticity theorem is originally stated for the Euclidean setting in \cite{BBO}, the arguments are purely local, and can be applied to small balls on compact manifolds to yield the following

\begin{thm}\label{etaellip}\emph{(Theorem 2 of \cite{BBO})} There exist positive constants $\epsilon_0(M),\delta_0(M),\eta_0(M)>0$ such that if $u_{\epsilon}$ solves $(\ref{eleqn})$ on a geodesic ball $B_r(x)$, where $\epsilon<\epsilon_0$, $r\leq\delta_0$ and
\begin{equation}\label{lowenergy}
\int_{B_r(x)}e_{\epsilon}(u_{\epsilon})\leq r^{n-2}\eta_0 |\log (\epsilon/r)|,
\end{equation}
then\footnote{Our choice of the constant $\frac{7}{8}$ here was of course somewhat arbitrary; we could replace it with any constant in $(0,1)$, changing $\eta_0$ accordingly.}
\begin{equation}
|u_{\epsilon}|^2(x)\geq \frac{7}{8}.
\end{equation}
\end{thm}

Applying this at the zeros $x_{\epsilon}$ of our nontrivial solutions $u_{\epsilon}$, with $r=\delta_0(M)$, we see that for all $\epsilon$ sufficiently small, we must have
\begin{equation}
E_{\epsilon}(u_{\epsilon})\geq \int_{B_{\delta_0}(x_{\epsilon})}e_{\epsilon}(u_{\epsilon})>\delta_0^{n-2}\eta_0(|\log\epsilon|-|\log\delta_0|),
\end{equation}
from which (\ref{goodbound}) follows.
\end{proof}

\hspace{6mm} Applying the preceding lemma to the nontrivial solutions of Proposition \ref{existence}, we immediately obtain

\begin{lem}\label{scminmax} If $M^n$ is a Riemannian manifold with $H^1_{dR}(M)=0,$ then the min-max constants $c_{\epsilon}(M)$ defined by $(\ref{cdef})$ satisfy the lower bound of $(\ref{cbounds})$: namely,
\begin{equation}\label{scbounds}
\liminf_{\epsilon\to 0}\frac{c_{\epsilon}(M)}{|\log\epsilon|}>0.
\end{equation}
\end{lem}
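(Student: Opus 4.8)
The plan is to obtain this lower bound by simply feeding the min-max critical points produced in Proposition \ref{existence} into the lower energy estimate of Lemma \ref{toplem}. First I would recall that, for every $\epsilon > 0$, Proposition \ref{existence} furnishes a critical point $u_\epsilon \in H^1(M,\mathbb{R}^2)$ of $E_\epsilon$ with $E_\epsilon(u_\epsilon) = c_\epsilon(M) > 0$ and $\|u_\epsilon\|_\infty \leq 1$. In particular each $u_\epsilon$ solves the Ginzburg-Landau equation $(\ref{eleqn})$, and these solutions are \emph{nontrivial}: the only solutions on which $E_\epsilon$ vanishes are the constant maps into $S^1$, which have zero energy, whereas $E_\epsilon(u_\epsilon) = c_\epsilon(M) > 0$.

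Next I would invoke Lemma \ref{toplem} directly. Since $H^1_{dR}(M) = 0$ by hypothesis and $\{u_\epsilon\}_{\epsilon > 0}$ is a family of nontrivial solutions of $(\ref{eleqn})$, the lemma applies and gives
\[
\liminf_{\epsilon \to 0} \frac{c_\epsilon(M)}{|\log\epsilon|} \;=\; \liminf_{\epsilon \to 0} \frac{E_\epsilon(u_\epsilon)}{|\log\epsilon|} \;>\; 0,
\]
which is precisely the claimed bound $(\ref{scbounds})$.

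I do not expect any real obstacle here: the substantive work has already been carried out in the preceding two results, and the only point needing a word of justification is that the min-max critical points are genuinely nontrivial, which is immediate from $c_\epsilon(M) > 0$. If desired, one could additionally note that the bound is uniform by tracing through the proof of Lemma \ref{toplem}: the implied constant there is $\delta_0(M)^{n-2}\eta_0(M)$, depending only on the geometry of $M$, so in fact $\liminf_{\epsilon\to 0} c_\epsilon(M)/|\log\epsilon| \geq \delta_0(M)^{n-2}\eta_0(M)$. This refinement is not needed for the stated conclusion, however.
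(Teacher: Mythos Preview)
Your proposal is correct and matches the paper's own argument essentially verbatim: the paper simply states that applying Lemma \ref{toplem} to the nontrivial solutions of Proposition \ref{existence} immediately yields the result. Your added justification that the critical points are nontrivial (because $c_\epsilon(M)>0$) and your remark on the explicit lower constant are both accurate elaborations of what the paper leaves implicit.
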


\hspace{6mm} Next, we observe that these lower bounds can be extended to arbitrary manifolds by way of a simple trick, which can easily be applied to a wide range of min-max constructions. The resulting energy estimates are somewhat crude, but sufficient to establish the desired energy blow-up.

\hspace{6mm} Let $(M^n,g)$ once again be an arbitrary compact manifold, and recall the definition of $\Gamma(M)$:
$$\Gamma(M):=\{F\in C^0(D, H^1(M,\mathbb{R}^2))\mid F(y)\equiv y\text{ for }y\in S^1\}.$$
Given a domain $\Omega\subset M$ and a family $F\in \Gamma(M)$, it's clear that the family $F|_{\Omega}\in C^0(D, H^1(\Omega,\mathbb{R}^2))$ given by restriction
$$y\mapsto F(y)|_{\Omega}$$
lies in $\Gamma(\Omega)$, and trivially satisfies the bound
$$E_{\epsilon}(F(y))\geq E_{\epsilon}(F(y)|_{\Omega}).$$
As a consequence, we obtain the simple estimate
\begin{equation}\label{restriction}
c_{\epsilon}(M)\geq \inf_{F\in \Gamma(\Omega)}\max_{y\in D}E_{\epsilon}(F(y))
\end{equation}
for any subdomain $\Omega\subset M$.

\hspace{6mm} Now, let $B^n\subset M$ be an embedding of the closed $n$-ball into $M$ (e.g., as a closed geodesic ball), and consider the map 
$$R: H^1(B^n,\mathbb{R}^2)\to H^1(S^n,\mathbb{R}^2)$$
given by identifying $B^n$ with a closed hemisphere and reflecting. That is, for $u\in H^1(B^n,\mathbb{R}^2)$, define
$$Ru(x_0,\ldots,x_n):=u\circ f(|x_0|,x_1,\ldots,x_n),$$
where $f: S^n_+\to B^n$ is a diffeomorphism with the closed hemisphere 
$$S^n_+=\{(x_0,\ldots,x_n)\in S^n\mid x_0\geq 0\}.$$
It's then straightforward to check that $R$ is a bounded (hence continuous) linear map, and in particular,
\begin{equation}\label{rinfam}
R\circ F\in \Gamma(S^n)\text{ for any }F\in \Gamma(B^n).
\end{equation}
Moreoever, since reflection across the equator simply doubles the energy $E_{\epsilon}$ of a map in $H^1(S^n_+,\mathbb{R}^2)$, and $f: S^n_+\to B^n$ is necessarily bi-Lipschitz, we have an estimate of the form 
\begin{equation}
C^{-1}E_{\epsilon}(Ru)\leq E_{\epsilon}(u)\leq CE_{\epsilon}(Ru)\text{ for every }u\in H^1(M,\mathbb{R}^2),
\end{equation}
for some constant $C$ depending on our choice of $f$.

\hspace{6mm} Applying (\ref{restriction}) to a fixed choice of closed ball $B^n\subset M$, and fixing a choice of $f: S^n_+\to B^n$, we conclude from (\ref{rinfam}) that
\begin{equation}\label{spherebound}
c_{\epsilon}(M)\geq C^{-1}c_{\epsilon}(S^n,g_{standard})
\end{equation}
for some finite, positive constant $C$ independent of $\epsilon$. Finally, we note that since $H^1_{dR}(S^n)=0$, we can apply Lemma \ref{scminmax} to $(S^n,g_{standard})$, and combining (\ref{scbounds}) with (\ref{spherebound}), we arrive at the desired lower bound:

\begin{prop}\label{lbds} On any compact manifold $(M^n,g)$, the min-max constants $c_{\epsilon}(M)$ satisfy
\begin{equation}
\liminf_{\epsilon\to 0}\frac{c_{\epsilon}(M)}{|\log\epsilon|}>0.
\end{equation}
\end{prop}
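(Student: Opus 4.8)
The plan is to assemble the ingredients already assembled in the preceding discussion; no new estimate is needed. The key point is that the round sphere serves as a universal comparison model: for an arbitrary compact $(M^n,g)$ we have, by fixing an embedded closed ball $B^n\subset M$, applying the restriction estimate (\ref{restriction}) with $\Omega=B^n$, and then composing with the doubling map $R$ of (\ref{rinfam}) together with its bi-Lipschitz energy comparison, the chain
\begin{equation*}
c_{\epsilon}(M)\;\geq\;\inf_{F\in\Gamma(B^n)}\max_{y\in D}E_{\epsilon}(F(y))\;\geq\;C^{-1}\,c_{\epsilon}(S^n,g_{standard}),
\end{equation*}
with $C=C(M,f)$ independent of $\epsilon$. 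So it suffices to bound the min-max values on the round sphere from below by a positive multiple of $|\log\epsilon|$.

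For that I would simply invoke Lemma \ref{scminmax}: since $H^1_{dR}(S^n)=0$ for every $n\geq 2$, its hypothesis is met and it yields $\liminf_{\epsilon\to 0}c_{\epsilon}(S^n,g_{standard})/|\log\epsilon|>0$. Dividing the displayed chain by $|\log\epsilon|$ and passing to the $\liminf$ then gives
\begin{equation*}
\liminf_{\epsilon\to 0}\frac{c_{\epsilon}(M)}{|\log\epsilon|}\;\geq\;C^{-1}\liminf_{\epsilon\to 0}\frac{c_{\epsilon}(S^n,g_{standard})}{|\log\epsilon|}\;>\;0,
\end{equation*}
which is exactly the assertion.

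I do not expect a genuine obstacle at this stage, since the analytic heart of the matter has already been dealt with: Lemma \ref{toplem} shows that a nontrivial solution of (\ref{eleqn}) on a manifold with vanishing first cohomology must vanish at some point, whence the $\eta$-ellipticity theorem (Theorem \ref{etaellip}) forces energy of order $|\log\epsilon|$ on a fixed-radius ball, and Proposition \ref{existence} provides the solutions realizing $c_\epsilon$. The only point I would be careful to record explicitly is that $R\circ F$ really belongs to $\Gamma(S^n)$, i.e.\ that it restricts to the identity on $S^1\subset\mathbb{R}^2$; this holds because $F(y)$ is the constant map with value $y$ for $y\in S^1$ and $R$ sends constant maps to constant maps with the same value. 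With that checked, the proposition follows from the soft comparison above.
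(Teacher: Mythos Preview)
Your proposal is correct and follows the paper's argument essentially verbatim: the paper likewise reduces to the sphere via the restriction estimate (\ref{restriction}) with $\Omega=B^n$ and the reflection map $R$, obtains (\ref{spherebound}), and then applies Lemma \ref{scminmax} to $(S^n,g_{standard})$ using $H^1_{dR}(S^n)=0$. Your explicit check that $R\circ F\in\Gamma(S^n)$ because $R$ preserves constant maps is exactly the content of (\ref{rinfam}).
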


\section{Upper Bounds on the Energies}

\hspace{6mm} To find suitable upper bounds for the energies $c_{\epsilon}(M)$, we just need to produce families $F_{\epsilon}\in \Gamma(M)$ consisting of maps that behave roughly like model solutions of (\ref{eleqn}).

\hspace{6mm} Given $\epsilon>0$, consider the map $v_{\epsilon}:\mathbb{R}^2\to \mathbb{R}^2$ defined by
\begin{equation}\label{vedef}
v_{\epsilon}(z)=\frac{z}{|z|},\text{ for } |z|>\epsilon,\text{ }v_{\epsilon}(z)=\frac{z}{\epsilon}\text{ for }|z|\leq \epsilon.
\end{equation}
Letting $\pi_z^{\perp}:\mathbb{R}^2\to \mathbb{R}^2$ denote orthogonal projection onto $[\mathbb{R}z]^{\perp}$, we then have
\begin{equation}\label{dve}
dv_{\epsilon}(z)=\frac{\pi_z^{\perp}}{|z|}\text{ for }|z|>\epsilon\text{ and }dv_{\epsilon}(z)=\frac{1}{\epsilon}Id\text{ for }|z|\leq \epsilon,
\end{equation}
and, in particular,
\begin{equation}\label{vest1}
e_{\epsilon}(v_{\epsilon})=\frac{1}{2}|dv_{\epsilon}|^2(z)+\frac{W(v_{\epsilon}(z))}{\epsilon^2}\leq \frac{1}{2|z|^2}\text{ for }|z|>\epsilon,\text{ and }\leq \frac{9}{4\epsilon^2}\text{ for }|z|\leq \epsilon.
\end{equation}
A quick computation then reveals an energy bound of the form
\begin{equation}\label{diskestimate}
E_{\epsilon}(v_{\epsilon},D_R)=\int_{\{|z|\leq R\}}e_{\epsilon}(v_{\epsilon})\leq \pi\log(R/\epsilon)+C
\end{equation}
for the restriction of $v_{\epsilon}$ to the disk $D_R$ of radius $R$ about the origin. 

\hspace{6mm} Let $\Omega \subset \mathbb{R}^2$ be a bounded domain, and consider the family of maps 
$$D\ni y \mapsto v_{y,\epsilon}\in Lip(\Omega,\mathbb{R}^2)$$
given by the translates
\begin{equation}\label{vydef}
v_{y,\epsilon}(z)=v_{\epsilon}(z+\frac{y}{1-|y|})\text{ for }|y|<1,
\end{equation}
and
\begin{equation}\label{vybdry}
v_{y,\epsilon}(z)=y\text{ for }y\in \partial D.
\end{equation}
Since $\Omega$ is bounded, it follows from (\ref{vedef}) and (\ref{dve}) that $y\mapsto v_{y,\epsilon}$ is a continuous family in $Lip(\Omega,\mathbb{R}^2)$, and thus, by (\ref{vybdry}), a member of $\Gamma(\Omega)$. In light of the energy estimate (\ref{diskestimate}), this family seems like a promising starting point for constructing well-behaved families on an arbitrary manifold.

\hspace{6mm} Now, let $M$ be a compact manifold, and let $f: M\to \mathbb{R}^2$ be a Lipschitz map. By the preceding discussion, it's clear that 
\begin{equation}\label{famdef}
D\ni y\mapsto F_y:=v_{y,\epsilon}\circ f
\end{equation}
defines a valid family in $\Gamma(M)$; thus, we can estimate the min-max constants $c_{\epsilon}$ from above by making a reasonable choice of $f\in Lip(M,\mathbb{R}^2)$. 

With $f$ and $F_y$ as above, setting $w:=\frac{-y}{1-|y|}$, it follows from (\ref{vest1}) that
\begin{equation}\label{fest1}
E_{\epsilon}(F_y)\leq\int_{f^{-1}(\mathbb{C}\setminus D_{\epsilon}(w))} \frac{1}{2}Lip(f)^2\frac{1}{2|f(x)-w|^2}+\frac{9}{4\epsilon^2}|f^{-1}(D_{\epsilon}(w))|,
\end{equation}
Suppose now that the Jacobian $|Jf|=|df^1\wedge df^2|$ and the level sets $f^{-1}(\{z\})$ of $f$ satisfy estimates of the form
\begin{equation}\label{fcond1}
|Jf(x)|\geq C^{-1} \text{ a.e. }x\in M
\end{equation}
and
\begin{equation}\label{fcond2}
\sup_{z\in \mathbb{C}}\mathcal{H}^{n-2}(f^{-1}(\{z\}))\leq C
\end{equation}
for some finite, positive constants $C$.
Then the coarea formula for Lipschitz maps (as stated in, e.g., \cite{EG}), together with (\ref{fest1}), yields 
\begin{eqnarray*}
E_{\epsilon}(F_y)&\leq &CLip(f)^2\int_{f(M)\setminus D_{\epsilon}(w)}\frac{1}{|z-w|^2}\cdot \mathcal{H}^{n-2}(f^{-1}(\{z\}))dz\\
&&+\frac{C}{\epsilon^2}\int_{D_{\epsilon}(w)}\mathcal{H}^{n-2}(f^{-1}\{z\})dz\\
&\leq &C^2\int_{f(M)\setminus D_{\epsilon}(w)}\frac{1}{|z-w|^2}+C^2.
\end{eqnarray*}
Finally, since the image $f(M)$ is a bounded subset of $\mathbb{R}^2$, we arrive at an estimate
\begin{equation}\label{goodest}
E_{\epsilon}(F_y)\leq C_1|\log\epsilon|+C_2,
\end{equation}
where $C_1$ and $C_2$ are constants depending only on $f$. Summarizing, we've proved the following:

\begin{lem}\label{flemma} Given a Lipschitz map $f: M\to \mathbb{R}^2$ satisfying estimates of the form $(\ref{fcond1})$ and $(\ref{fcond2})$, the families $F^{\epsilon}\in \Gamma(M)$ defined by
$$F^{\epsilon}(y):=v_{y,\epsilon}\circ f$$
satisfy
\begin{equation}\label{febounds}
\limsup_{\epsilon\to 0}\frac{1}{|\log\epsilon|}\max_{y\in D}E_{\epsilon}(F^{\epsilon}(y))<\infty.
\end{equation}
\end{lem}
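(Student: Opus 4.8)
The computations in the paragraphs immediately preceding the statement already carry out most of the work, so my plan is mainly to assemble them while paying attention to uniformity in the parameter $y \in D$. First I would note that, as recorded above, $y \mapsto F^{\epsilon}(y) = v_{y,\epsilon}\circ f$ is a continuous family in $Lip(M,\mathbb{R}^2) \subset H^1(M,\mathbb{R}^2)$ with $F^{\epsilon}(y) \equiv y$ for $y \in S^1$, so that $F^{\epsilon} \in \Gamma(M)$ and $y \mapsto E_{\epsilon}(F^{\epsilon}(y))$ is continuous on the compact disk $D$; the maximum $\max_{y \in D} E_{\epsilon}(F^{\epsilon}(y))$ is therefore attained, and it suffices to establish a bound $E_{\epsilon}(F^{\epsilon}(y)) \le C_1|\log\epsilon| + C_2$ in which $C_1, C_2$ depend only on $f$ and not on $y \in D$ or on small $\epsilon$.

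Fixing $y$ with $|y|<1$ and setting $w := -y/(1-|y|)$, so that $F^{\epsilon}(y)(x) = v_{\epsilon}(f(x)-w)$, I would combine the chain rule with the pointwise density bounds (\ref{vest1}) for $v_{\epsilon}$ to reach the inequality (\ref{fest1}): the energy of $F^{\epsilon}(y)$ splits into a contribution from $f^{-1}(\mathbb{C}\setminus D_{\epsilon}(w))$, where $v_{\epsilon}$ is $S^1$-valued so the potential term vanishes and the density is bounded by $C|f(x)-w|^{-2}$, plus a contribution from $f^{-1}(D_{\epsilon}(w))$, where the density is bounded by $C\epsilon^{-2}$, with $C$ depending only on $Lip(f)$ in both cases. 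Applying the coarea formula for Lipschitz maps, and using that in each term the integrand depends on $x$ only through $f(x)$, the fiber integrals reduce to the quantities $\mathcal{H}^{n-2}(f^{-1}\{z\})$; the hypotheses (\ref{fcond1}) and (\ref{fcond2}) then bound $|Jf|^{-1}$ and $\mathcal{H}^{n-2}(f^{-1}\{z\})$ by a constant, converting (\ref{fest1}) into
\[
E_{\epsilon}(F^{\epsilon}(y)) \;\le\; C'\int_{f(M)\setminus D_{\epsilon}(w)} \frac{dz}{|z-w|^{2}} \;+\; C'\epsilon^{-2}\,|D_{\epsilon}(w)|,
\]
whose last term is the harmless constant $\pi C'$. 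So everything comes down to bounding the planar integral $I(w):=\int_{f(M)\setminus D_{\epsilon}(w)}|z-w|^{-2}\,dz$ by $C|\log\epsilon| + C$ uniformly over all centers $w$ that can arise.

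This last, purely two-dimensional estimate is the only point with genuine content, and it is an issue of uniformity at the boundary of $D$: since $|w| = |y|/(1-|y|)\to\infty$ as $|y|\to 1$, the center $w$ ranges over all of $\mathbb{R}^2$. I would split into two regimes. Choosing $\rho$ with $f(M)\subset D_{\rho}$: if $|w|\ge 2\rho$, then every $z\in f(M)$ has $|z-w|\ge \rho$, so $I(w)\le |f(M)|\,\rho^{-2}$, bounded independently of $\epsilon$ and $y$; if $|w|\le 2\rho$, then for $\epsilon$ small $f(M)\setminus D_{\epsilon}(w)\subset D_{3\rho}(w)\setminus D_{\epsilon}(w)$, and integrating in polar coordinates about $w$ gives $I(w)\le 2\pi\log(3\rho/\epsilon)\le 2\pi|\log\epsilon| + C(\rho)$. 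In either case $I(w)\le 2\pi|\log\epsilon| + C$ with $C$ depending only on $f$.

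Feeding this estimate back into the previous display yields $E_{\epsilon}(F^{\epsilon}(y))\le C_1|\log\epsilon| + C_2$ uniformly in $y\in D$ for all small $\epsilon$, which is exactly (\ref{goodest}); taking the maximum over $y$ and dividing by $|\log\epsilon|$ then gives (\ref{febounds}). The main thing to watch, then, is keeping every constant uniform up to $\partial D$ — and in particular recognizing that the a priori worrisome regime $|w|\to\infty$ is in fact the easy one, since there the vortex of $v_{y,\epsilon}$ has been translated entirely off the image $f(M)$ and $F^{\epsilon}(y)$ is close to the constant map $y$.
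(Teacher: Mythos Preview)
Your argument is correct and follows essentially the same route as the paper: the chain rule and (\ref{vest1}) give (\ref{fest1}), then the coarea formula together with (\ref{fcond1})--(\ref{fcond2}) reduces matters to the planar integral $\int_{f(M)\setminus D_{\epsilon}(w)}|z-w|^{-2}\,dz$, which is bounded by $C|\log\epsilon|+C$ because $f(M)$ is bounded. The only difference is that you make the uniformity in $y\in D$ explicit by splitting on $|w|\lessgtr 2\rho$, whereas the paper absorbs this into the single remark that $f(M)$ is bounded; your version is a bit more carefully written on this point but the idea is identical.
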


\hspace{6mm} Our goal now is to construct $f \in Lip(M,\mathbb{R}^2)$ satisfying (\ref{fcond1}) and (\ref{fcond2}). We do this via triangulation. Let $\Phi: M\to |\mathcal{K}|$ be a bi-Lipschitz map from $M$ to the underlying space of a finite simplicial complex $\mathcal{K}$ in some $\mathbb{R}^L$ (see, e.g., \cite{Wh} for the classical construction). For each $k$-simplex $\Delta \in \mathcal{K}$, denote by $V(\Delta)$ the $k$-plane through the origin of $\mathbb{R}^L$ parallel to $\Delta$. Since $\mathcal{K}$ is finite, we can choose a generic $2$-plane $\Pi \subset \mathbb{R}^L$ such that the restriction 
$$p|_{V(\Delta)}:V(\Delta)\to \Pi$$
of the orthogonal projection
$$p:\mathbb{R}^L\to \Pi$$
has rank $2$ for every $\Delta \in \mathcal{K}$ of dimension $\geq 2$ and rank $1$ when $\dim \Delta=1$. Now identify $\Pi$ with $\mathbb{R}^2$, and set
\begin{equation}\label{fdef}
f:=p\circ \Phi.
\end{equation}

\hspace{6mm} Since $\Phi$ is bi-Lipschitz, $\exists$ $c>0$ such that, for a.e. $x \in M$, the pullback 
$$\Phi^*: \bigwedge^2T_{\Phi(p)}^*|\mathcal{K}|\to \bigwedge^2T_p^*M$$
satisfies
\begin{equation}\label{phistarest}
|\Phi^*(\zeta)|\geq c|\zeta|\text{ for every }\zeta\in \bigwedge^2T_{\Phi(p)}^*|\mathcal{K}|.
\end{equation}
Furthermore, almost every $x\in M$ lies in the preimage of the interior $\Delta^{\circ}$ of some $n$-dimensional simplex $\Delta \in \mathcal{K}$. At such a point $x$, the differential $df$ of (\ref{fdef}) is given by
$$p|_{V(\Delta)}\circ d\Phi,$$
and since $p|_{V(\Delta)}$ has full rank by our choice of $\Pi$, it follows that
\begin{equation}
|Jf(x)|=|d\Phi^*(p_{V(\Delta)}^*(e^1\wedge e^2))|\geq c|p_{V(\Delta)}^*(e^1\wedge e^2)|\geq C^{-1}
\end{equation}
for some finite positive constant $C$. Thus, our chosen $f$ satisfies (\ref{fcond1}), and it remains to check (\ref{fcond2}).

\hspace{6mm} This is similarly straightforward. For each $\Delta \in \mathcal{K}$ and $z\in \Pi$, our constraints on the rank of $p|_{V(\Delta)}$ imply that $p^{-1}(\{z\})\cap \Delta$ is given by the intersection of $\Delta$ with a translate of some subspace of $V(\Delta)$ of dimension $\leq n-2$. Consequently, we have simple bounds of the form
$$\mathcal{H}^{n-2}(p^{-1}(\{z\}\cap \Delta)\leq c_n\cdot diam(\Delta)^{n-2},$$
and thus, letting $N$ denote the number of simplices in $\mathcal{K}$,
\begin{equation}
\sup_{z\in \Pi}\mathcal{H}^{n-2}(p^{-1}(\{z\})\cap |\mathcal{K}|)\leq N c_n diam(\Delta)^{n-2}<\infty.
\end{equation}
It then follows that
\begin{equation}
\mathcal{H}^{n-2}(f^{-1}(\{z\}))\leq Lip(\Phi^{-1})^{n-2}\mathcal{H}^{n-2}(p^{-1}(\{z\})\cap|\mathcal{K}|)\leq C,
\end{equation}
for each $z\in \Pi$, so that (\ref{fcond2}) holds as well.

\hspace{6mm} Thus, $f$ satisfies the hypotheses of Lemma \ref{flemma}, so we have families $F^{\epsilon}\in \Gamma(M)$ satisfying the estimate (\ref{febounds}), and consequently,
\begin{equation}\label{upperbds}
\lim\sup_{\epsilon\to 0}\frac{c_{\epsilon}(M)}{|\log\epsilon|}=\limsup_{\epsilon\to 0}\frac{1}{|\log\epsilon|}\inf_{F\in \Gamma(M)}\max_{y\in D}E_{\epsilon}(F(y))<\infty.
\end{equation}
Proposition \ref{lbds} and (\ref{upperbds}) then combine to give us the estimate (\ref{cbounds}), completing the proof of Theorem \ref{mainthm}.

\section{The Energy Concentration Varifold when $H^1_{dR}(M)=0$}

\hspace{6mm} For $\epsilon \in (0,1)$, let $u_{\epsilon}$ be a solution of (\ref{eleqn}) as constructed in Theorem \ref{mainthm}. Our goal in this section is to show that when $H^1_{dR}(M)=0$, along some subsequence $\epsilon_j\to 0$, the energy concentration measures
$$\mu_{\epsilon}:=\frac{e_{\epsilon}(u_{\epsilon})}{|\log\epsilon|}dv_g$$
concentrate on a stationary, rectifiable $(n-2)$-varifold. The key analytical lemma we'll need to establish this is the density estimate
\begin{lem}\label{denslem} If $H^1_{dR}(M)=0$, then for any limiting measure $\mu=\lim_{j\to\infty} \mu_{\epsilon_j}$, the $(n-2)$-density 
$$\Theta^{n-2}(\mu,x):=\lim_{r\to 0}\frac{\mu(B_r(x))}{\omega_{n-2}r^{n-2}}>0$$
for every $x\in spt(\mu).$
\end{lem}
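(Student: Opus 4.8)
The plan is to split $spt(\mu)$ into the part ``detected'' by the near‑zeros of the solutions $u_{\epsilon_j}$, on which the density lower bound is essentially a restatement of the $\eta$‑ellipticity theorem, and its complement, on which I would show---exploiting the \emph{global} structure of \eqref{eleqn} together with $H^1_{dR}(M)=0$---that $\mu$ vanishes identically. Concretely, set
$$\Sigma:=\Big\{x\in M\ \Big|\ \exists\ j_k\to\infty,\ x_k\to x\ \text{ with }\ |u_{\epsilon_{j_k}}(x_k)|\le\tfrac12\Big\},$$
a closed subset of $M$. The lemma follows from the two assertions: \emph{(i)} $\Theta^{n-2}(\mu,x)>0$ for every $x\in\Sigma$; and \emph{(ii)} $spt(\mu)\subseteq\Sigma$.

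For \emph{(i)} I would argue much as in the proof of Lemma~\ref{toplem}, using only Theorem~\ref{etaellip}. Given $x\in\Sigma$ and $\rho<\delta_0(M)$, for $k$ large the point $x_k$ lies in $B_{\rho/2}(x)$ with $|u_{\epsilon_{j_k}}(x_k)|^2<\tfrac78$, so the contrapositive of Theorem~\ref{etaellip} on $B_{\rho/2}(x_k)\subset B_\rho(x)$ gives $\int_{B_\rho(x)}e_{\epsilon_{j_k}}(u_{\epsilon_{j_k}})>(\rho/2)^{n-2}\eta_0\,|\log(2\epsilon_{j_k}/\rho)|$. Passing to the limit, using $|\log(2\epsilon_{j_k}/\rho)|/|\log\epsilon_{j_k}|\to1$ and the weak-$\ast$ convergence $\mu_{\epsilon_j}\rightharpoonup\mu$, yields $\mu(\overline{B_\rho(x)})\ge(\rho/2)^{n-2}\eta_0$ for all small $\rho$, whence $\Theta^{n-2}(\mu,x)\ge 2^{2-n}\eta_0/\omega_{n-2}>0$ (the limit defining the density existing by the usual almost‑monotonicity of $\rho\mapsto\rho^{2-n}\mu(B_\rho(x))$, inherited from the monotonicity formula for \eqref{eleqn}). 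The same estimate, combined with a Vitali covering argument and the bound $E_{\epsilon_j}(u_{\epsilon_j})\le C|\log\epsilon_j|$ from \eqref{cbounds}, also gives $\mathcal H^{n-2}(\Sigma)<\infty$, which is needed for \emph{(ii)}.

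Assertion \emph{(ii)} is the heart of the matter, and where $H^1_{dR}(M)=0$ enters essentially. Fix a compact $K\subset M\setminus\Sigma$; by definition of $\Sigma$ there are $r>0$ and $J$ with $|u_{\epsilon_j}|>\tfrac12$ on the $r$‑neighborhood of $K$ for all $j\ge J$, so there $u_{\epsilon_j}=\rho_j\phi_j$ with $\rho_j=|u_{\epsilon_j}|\in[\tfrac12,1]$, $\phi_j$ valued in $S^1$, and $ju_{\epsilon_j}=\rho_j^2\,\phi_j^*d\theta$. The interior modulus estimates---the analogues on small geodesic balls of the estimates of \cite{BBO},\cite{LR}, coming from $\Delta\rho_j=\rho_j|d\phi_j|^2-\epsilon_j^{-2}(1-\rho_j^2)\rho_j$ and $\eta$‑ellipticity---give $\|1-\rho_j\|_{C^0(K)}\to0$ and $\int_K\big(|d\rho_j|^2+\epsilon_j^{-2}W(u_{\epsilon_j})\big)\to0$, so it suffices to bound $\int_K|ju_{\epsilon_j}|^2$ independently of $j$. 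Here I would invoke that $ju_{\epsilon_j}$ is \emph{globally} divergence‑free on $M$ by \eqref{divj}: since $H^1_{dR}(M)=0$ the Hodge decomposition has no harmonic part, and since a harmonic function on the compact manifold $M$ is constant, $ju_{\epsilon_j}$ is co‑exact and is recovered from $Ju_{\epsilon_j}:=\tfrac12\,d(ju_{\epsilon_j})=du_{\epsilon_j}^1\wedge du_{\epsilon_j}^2$ via the Green's operator of the Hodge Laplacian. The Jacobian $Ju_{\epsilon_j}$ is bounded uniformly as a measure and concentrates on $\Sigma$ (by the Jerrard--Soner‑type estimates exploited in \cite{BBO}); combined with $\int_K|d\rho_j|^2\to0$ off $\Sigma$ and $\mathcal H^{n-2}(\Sigma)<\infty$, this gives $\|Ju_{\epsilon_j}\|_{L^1(K')}\to0$ for any $K'\supset K$ with $\overline{K'}\subset M\setminus\Sigma$, while the part of $Ju_{\epsilon_j}$ supported outside $K'$ contributes a uniformly bounded, smooth piece to $ju_{\epsilon_j}$ on $K$; elliptic estimates then yield $\sup_j\int_K|ju_{\epsilon_j}|^2<\infty$, in sharp contrast with the full energy $E_{\epsilon_j}(u_{\epsilon_j})\sim|\log\epsilon_j|$. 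Consequently $\int_K e_{\epsilon_j}(u_{\epsilon_j})$ stays bounded, so $\frac1{|\log\epsilon_j|}\int_K e_{\epsilon_j}(u_{\epsilon_j})\to0$, i.e.\ $\mu(K)=0$; since $K$ was arbitrary, $\mu(M\setminus\Sigma)=0$ and $spt(\mu)\subseteq\Sigma$.

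The main obstacle is entirely in \emph{(ii)}: making rigorous the passage from ``$ju_{\epsilon_j}$ divergence‑free and co‑exact, with Jacobian small away from $\Sigma$'' to ``$ju_{\epsilon_j}$ bounded in $L^2_{\mathrm{loc}}(M\setminus\Sigma)$''. This depends on the borderline (Hardy/Lorentz‑space) Jacobian and modulus estimates that constitute the technical core of \cite{BBO}, transplanted from Euclidean balls to small geodesic balls. It is also precisely this step that requires $H^1_{dR}(M)=0$: without it, $ju_{\epsilon_j}$ may carry a nontrivial harmonic component of size $\sim|\log\epsilon_j|^{1/2}$, and the diffuse, density‑zero limiting measure of the $S^1\times N$ example of Remark~\ref{harmex} shows that the conclusion then genuinely fails.
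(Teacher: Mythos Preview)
Your high-level strategy---define a vorticity set $\Sigma$, use $\eta$-ellipticity to get the density lower bound on $\Sigma$, and use the global Hodge structure (enabled by $H^1_{dR}(M)=0$) to show $\mu$ vanishes off $\Sigma$---is exactly the paper's, though the paper packages it as a direct contrapositive (zero density at $x$ $\Rightarrow$ $|u_\epsilon|$ bounded below near $x$ via Theorem~\ref{etaellip} $\Rightarrow$ $\mu=0$ there). Your part \emph{(i)} is fine and matches the paper.

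The genuine gap is in your implementation of \emph{(ii)}. Your assertion that ``$Ju_{\epsilon_j}$ is bounded uniformly as a measure'' is false in general: Cauchy--Schwarz and \eqref{wetcbounds} give only $\|Ju_\epsilon\|_{L^1(M)}\le\big(\int_M|d|u_\epsilon||^2\big)^{1/2}\big(\int_M|du_\epsilon|^2\big)^{1/2}\le C|\log\epsilon|^{1/2}$, and Jerrard--Soner controls $Ju_\epsilon$ only in norms like $(C^{0,1})^*$, not in total variation. Worse, $ju_\epsilon$ is \emph{not} closed on your set $K'$ (there $d(ju_\epsilon)=2|u_\epsilon|\,d|u_\epsilon|\wedge\phi_\epsilon^*d\theta\neq0$), so there is no local harmonic equation with which to upgrade an $L^p$ bound to $L^2$; and your claim $\|Ju_\epsilon\|_{L^1(K')}\to0$ is circular, since Cauchy--Schwarz reduces it to control of $\int_{K'}|\phi_\epsilon^*d\theta|^2\sim\int_{K'}|ju_\epsilon|^2$, which is precisely what you are trying to prove. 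The paper's remedy---the key device from \cite{BBO} you gesture toward but do not name---is to replace $ju_\epsilon$ by $\gamma_\epsilon:=f(|u_\epsilon|^2)\,ju_\epsilon$ with $f(t)=1/t$ for $t\ge\tfrac34$. This makes $\gamma_\epsilon=(u_\epsilon/|u_\epsilon|)^*d\theta$ \emph{closed} wherever $|u_\epsilon|^2\ge\tfrac34$, so that (a) $d\gamma_\epsilon$ is supported on a set of volume $O(\epsilon^2)$ with pointwise bound $O(\epsilon^{-2})$, giving a genuine uniform bound $\|d\gamma_\epsilon\|_{L^1}\le C$ and hence a \emph{global} bound $\|d^*\xi_\epsilon\|_{L^p(M)}\le C_p$ for every $p<\tfrac{n}{n-1}$ on the co-exact Hodge component; and (b) on any ball where $|u_\epsilon|^2\ge\tfrac34$ one has $dd^*\xi_\epsilon=d\gamma_\epsilon=0$, so $d^*\xi_\epsilon$ is locally the gradient of a harmonic function and interior elliptic estimates upgrade the $L^p$ bound to an $L^2$ bound, whence $\mu=0$ there. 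The exact part $d\theta_\epsilon$ is dealt with separately by a coarea argument showing $\|d\theta_\epsilon\|_{L^2}^2\le C|\log\epsilon|^{1/2}$.
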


\hspace{6mm} To see why Lemma (\ref{denslem}) is sufficient to establish Thorem \ref{thm2}, we recall some of the results of Ambrosio and Soner in \cite{AS}. Given an integer $0\leq m\leq n$, denote by 
$$A_m(M)\subset End(TM)$$
the compact (fiber-)subbundle of $End(TM)$ given by\footnote{In the definition of generalized varifold in \cite{AS}, the trace inequality $tr(S)\geq m$ is replaced by equality, but the upper bound on the trace plays no role in their analysis.}
\begin{equation}
A_m(M):=\{S \in End(TM)\mid S=S^*,-nId\leq S\leq Id,\text{ }tr(S)\geq m.\}
\end{equation}
In the language of \cite{AS}, a \emph{generalized $m$-varifold} is a nonnegative Radon measure on the fiber bundle $A_m(M)$. Note that the Grassmannian bundle $G_m(TM)$ is naturally included in $A_m(M)$ by identifying $m$-dimensional subspaces with the associated orthogonal projections, and thus every standard $m$-varifold (in the sense of \cite{All},\cite{Si}) also defines a generalized $m$-varifold.

\hspace{6mm} As with standard $m$-varifolds, one can define the first variation $\delta V$ of a generalized $m$-varifold $V$ as follows \cite{AS}: given a smooth vector field $X$ on $M$, we set
\begin{equation}\label{dvdef}
\delta V(X):=\int_{A_m(M)}\langle S,\nabla X\rangle dV(S),
\end{equation}
and we call $V$ \emph{stationary} if $\delta V=0$. Naturally, one also defines the mass measure $\|V\|$ on $M$ as the pushforward of $V$ by the projection $\pi: A_m(M)\to M$ \cite{AS}. Given a sequence $V_j$ of generalized $m$-varifolds converging in $C^0(A_m)^*$ to a generalized $m$-varifold $V$, it follows from the standard properties of nonnegative Radon measures on compact spaces that $\|V_j\|\to \|V\|$ in $C^0(M)^*$; moreover, if $\delta V_j=0$, it follows immediately from the definition of $(C^0)^*$ convergence that $\delta V=0$ as well \cite{AS}.

\hspace{6mm} Aside from standard varifolds, the most important (and motivating) examples of generalized varifolds come from the stress-energy tensors associated with solutions of pdes satisfying an inner variation equation. For solutions of (\ref{eleqn}), one considers the tensor (viewed as a symmetric endomorphism)
\begin{equation}\label{tdef}
T_{\epsilon}(u_{\epsilon}):=e_{\epsilon}(u)Id-du^*du;
\end{equation}
it follows from (\ref{eleqn}) that 
$$div(T_{\epsilon})=0,$$
and thus, for any smooth vector field $X$ on $M$,
\begin{equation}\label{iveqn}
\int_M\langle T_{\epsilon}(u_{\epsilon}),\nabla X\rangle=0.
\end{equation}
In particular, writing 
\begin{equation}\label{pedef}
P_{\epsilon}:=Id-e_{\epsilon}(u_{\epsilon})^{-1}du^*du,
\end{equation}
it follows that the measure
\begin{equation}\label{vedef}
V_{\epsilon}:=\delta_{P_{\epsilon}}\times \mu_{\epsilon}
\end{equation}
defines a stationary generalized $(n-2)$-varifold with weight measure $\mu_{\epsilon}$.

\hspace{6mm} For solutions, like those of Theorem \ref{mainthm}, satisfying an energy bound $\mu_{\epsilon}(M)\leq C$, we can extract a subsequence $\epsilon_j\to 0$ such that
$$V_{\epsilon_j}\to V$$
as generalized varifolds, so that $V$ is again a stationary generalized $(n-2)$-varifold with weight measure
$$\|V\|=\mu:=\lim_{\epsilon_j\to 0}\mu_{\epsilon_j}.$$
We now recall the key measure-theoretic result of \cite{AS}:

\begin{prop}\label{asprop} (Theorem 3.8 of \cite{AS}) If $V$ is a generalized $m$-varifold for which $\delta V\in [C^0(M,TM)]^*$ and $\Theta^m(\|V\|,x)>0$ at $\|V\|$-a.e. $x\in spt(V)$, then there is a rectifiable $m$-varifold $\tilde{V}$ with $\|\tilde{V}\|=\|V\|$ and $\delta \tilde{V}=\delta V$.
\end{prop}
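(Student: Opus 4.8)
The plan is to prove the proposition by the blow-up method, i.e.\ the generalized-varifold analogue of the classical rectifiability theorem of Allard \cite{All}. Since every step is local, I would fix geodesic normal coordinates about an arbitrary point, so that $M$ is a small perturbation of Euclidean space there and all Riemannian corrections enter only as lower-order error terms. The argument has three phases: an almost-monotonicity formula and its density consequences; a blow-up compactness analysis at $\|V\|$-a.e.\ point; and the classification of the resulting tangent object, from which $\tilde V$ is reconstructed.

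For the first phase, write $V=\int_M\delta_x\otimes V_x\,d\|V\|(x)$ for the disintegration of $V$ over its weight, with barycenter $\bar S(x):=\int_{A_m(M)}S\,dV_x(S)$, which lies in the convex fibre $A_{m,x}$ and satisfies $\delta V(X)=\int_M\langle\bar S(x),\nabla X(x)\rangle\,d\|V\|(x)$. Testing the first-variation identity against radial fields $X_{x,r}(y)=\gamma(\dist(x,y)/r)(y-x)$ and using $\bar S=\bar S^*$, $tr(\bar S)\geq m$, $-nId\leq\bar S\leq Id$, together with $\delta V\in[C^0(M,TM)]^*$, one obtains a differential inequality showing $r\mapsto e^{\Lambda r}r^{-m}\|V\|(B_r(x))$ is nondecreasing, with $\Lambda$ controlled by $\|\delta V\|$ and the local geometry. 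Hence $\Theta^m(\|V\|,x)$ exists for $\|V\|$-a.e.\ $x$, is finite and locally bounded, and by hypothesis positive $\|V\|$-a.e.; in particular $\|V\|=\theta\,\mathcal H^m\llcorner S$ for an $\mathcal H^m$-$\sigma$-finite set $S$ with positive density $\theta$.

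For the second phase, I would fix a $\|V\|$-generic point $x_0$: one at which $\Theta:=\Theta^m(\|V\|,x_0)$ exists in $(0,\infty)$, at which $x_0$ is a Lebesgue point of $x\mapsto\bar S(x)$ for $\|V\|$, and at which $\|\delta V\|(B_r(x_0))=o(r^{m-1})$ — the last holding $\|V\|$-a.e.\ by differentiation of measures, using $\|V\|(B_r)\sim\Theta\omega_m r^m$. Rescaling about $x_0$ at rate $\lambda\to0$, the dilated generalized varifolds $V_{x_0,\lambda}$ have locally uniformly bounded mass (monotonicity), first variation with $\|\delta V_{x_0,\lambda}\|(B_R)\to0$ for each $R$ (since $\|\delta V\|(B_{R\lambda})=o((R\lambda)^{m-1})$ and curvature corrections are lower order), and barycenters converging in $L^1_{loc}$ against $\|V_{x_0,\lambda}\|$ to the constant $S_0:=\bar S(x_0)$. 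Passing to a subsequential limit $W$, one gets a generalized $m$-varifold on $\mathbb R^n$ that is stationary, has $\|W\|(B_r)/\omega_m r^m\equiv\Theta$ (so $\|W\|$ is a cone with $0\in spt\|W\|$), and satisfies $\int_{\mathbb R^n}\langle S_0,\nabla X\rangle\,d\|W\|=0$ for all $X\in C^1_c$. The positivity hypothesis is used precisely here: it guarantees $\|W\|\neq0$, so the blow-up is nondegenerate.

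The decisive third phase is the classification of $W$. Testing $\int\langle S_0,\nabla X\rangle\,d\|W\|=0$ with $X=\phi(y)\,e_k$ gives $div((S_0e_k)\|W\|)=0$ for each $k$, so $\|W\|$ is invariant under translation in every direction of the range $R$ of $S_0$, whence $\|W\|=\mathcal L^{\dim R}\llcorner R\otimes\nu$ for a Radon measure $\nu$ on $R^\perp$. The conical Euclidean growth $\|W\|(B_r)=\Theta\omega_m r^m$ with $\|W\|\neq0$ forces $\dim R\leq m$, while $tr(S_0)\geq m$ with each eigenvalue $\leq1$ forces at least $m$ positive eigenvalues, hence $\dim R\geq m$; so $\dim R=m$, all $m$ nonzero eigenvalues of $S_0$ equal $1$ (i.e.\ $S_0=\pi_R$, orthogonal projection onto the $m$-plane $R$), and $\nu$ is $0$-homogeneous, hence $\nu=\Theta\delta_0$ and $\|W\|=\Theta\,\mathcal H^m\llcorner R$. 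Therefore $\|V\|$ has a unique flat approximate tangent plane $T_x$ at $\|V\|$-a.e.\ $x$, with $\bar S(x)=\pi_{T_x}$; so $S=spt\|V\|$ is $m$-rectifiable, $\|V\|$ is an $m$-rectifiable measure, and defining $\tilde V:=\mathbf v(S,\theta)$, i.e.\ $d\tilde V(x,P):=\delta_{T_x}(P)\,d\|V\|(x)$, we get $\|\tilde V\|=\|V\|$ by construction and $\delta\tilde V(X)=\int\langle\pi_{T_x},\nabla X\rangle\,d\|V\|=\int\langle\bar S(x),\nabla X\rangle\,d\|V\|=\delta V(X)$. I expect the main obstacle to be the first two phases — establishing the monotonicity formula in the generalized setting (a radial field couples to a general $S\in A_m$ less cleanly than to a projection) and making the blow-up compactness precise, including the passage of the barycenter to the limit — while the third-phase classification is comparatively clean, being dictated by the algebraic constraints defining $A_m(M)$.
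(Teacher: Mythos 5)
This proposition is not proved in the paper at all: it is quoted verbatim as Theorem~3.8 of Ambrosio--Soner \cite{AS}, and the only hint the paper gives about its proof is the remark immediately following it, that ``positive density and bounded first variation force the fiber-wise center of mass of $V$ to have the structure of orthogonal projection onto an $m$-dimensional subspace,'' after which one is in the situation of Allard's rectifiability theorem. So there is no in-paper proof to compare your proposal against; what I can do is assess the sketch on its own terms. Your reconstruction is a faithful outline of the Ambrosio--Soner blow-up strategy, and your third phase is exactly the ``key observation'' the paper singles out: the algebraic constraints on $A_m$ ($-n\,Id\le S\le Id$, $tr\,S\ge m$), coupled with translation-invariance of the blow-up weight along the range of the barycenter and the Euclidean growth $\|W\|(B_r)=\Theta\omega_m r^m$, pin the rank of $\bar S(x_0)$ to exactly $m$ and force all nonzero eigenvalues to equal $1$, so $\bar S(x_0)$ is an orthogonal projection onto an $m$-plane at $\|V\|$-a.e.\ $x_0$.

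The one step I would either tighten or (better) bypass is your assertion that constant mass ratio makes $\|W\|$ a cone, from which you deduce $\nu=\Theta\delta_0$ and hence a unique flat tangent measure. That deduction rests on the equality case of a monotonicity-with-remainder formula for \emph{generalized} varifolds; the remainder involves $\langle (Id-\bar S_W)y,y\rangle$ for the barycenter $\bar S_W$ of the limit $W$, which does have a definite sign because $Id-\bar S_W\ge 0$, but this is exactly the part a skeptical reader will want spelled out, and note that the identity $\int\langle S_0,\nabla X\rangle\,d\|W\|=0$ you obtain from the Lebesgue-point argument is not the same as $\delta W=0$ with respect to the \emph{intrinsic} barycenter $\bar S_W$; you need the latter for the cone argument and it comes from $W$ being a weak-$*$ limit of stationary generalized varifolds. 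In fact you do not need the cone property (or any tangent-measure argument) at all: once you know $\bar S(x)\in G_m(T_xM)$ for $\|V\|$-a.e.\ $x$, the measure $\delta_{\bar S(\cdot)}\times\|V\|$ \emph{is} a standard $m$-varifold with weight $\|V\|$ and first variation $\delta V$, and Allard's rectifiability theorem (\cite{All}, Section~5) applied to it directly produces the rectifiable $\tilde V$ with the stated properties. This shorter closing is what the paper's remark is pointing to, and it makes the $\nu=\Theta\delta_0$ step dispensable.
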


\begin{remark} If the given $V$ is a (standard) varifold, this is simply Allard's rectifiability theorem (see \cite{All}, Section 5). The key observation of \cite{AS} is that positive density and bounded first variation force the fiber-wise center of mass of $V$ to have the structure of orthogonal projection onto an $m$-dimensional subspace. In particular, if $V$ has the structure of a Dirac mass in each fiber of $A_m$ (as in (\ref{vedef})), the induced varifold $\tilde{V}=V$.
\end{remark}

\begin{remark} In our statements of Lemma \ref{denslem} and Proposition \ref{asprop}, we have implicitly used the fact that generalized $m$-varifolds $V$ with bounded first variation satisfy a monotonicity property identical to that of standard $m$-varifolds, to ensure that the density $\Theta^m(\|V\|,x)$ is well-defined without the decorations $^*$ or $_*$. (The monotonicity follows from the standard computations of, e.g., Section 5 of \cite{All}, or Section 40 of \cite{Si}.)
\end{remark}

From Proposition \ref{asprop} and the preceding discussion, it is now clear that Lemma \ref{denslem} will be sufficient to establish the conclusion of Theorem \ref{thm2}; the remainder of this section will be devoted to establishing this positive density condition.

\hspace{6mm} To this end, we need a better understanding of the structure of the limiting measure $\mu$. By the local estimates of (\cite{BOS}, Theorem 1.1 and Proposition 1.3), the upper bound in (\ref{cbounds}) immediately gives us the uniform bound
\begin{equation}\label{wetcbounds}
\int_M|d|u_{\epsilon}||^2+\frac{W(u_{\epsilon})}{\epsilon^2}\leq C,
\end{equation}
where $C$ is independent of $\epsilon$. Defining $ju_{\epsilon}$ as before (\ref{jdef}), we observe that 
\begin{equation}\label{ududecomp}
|u_{\epsilon}|^2|du_{\epsilon}|^2=|ju_{\epsilon}|^2+|u_{\epsilon}|^2|d|u_{\epsilon}||^2,
\end{equation}
so it suffices to understand the contributions of $|ju_{\epsilon}|^2$ and $(1-|u_{\epsilon}|^2)|du_{\epsilon}|^2$ to the energy concentration.

\hspace{6mm} Next, note that the equation (\ref{eleqn}) on $(M,g)$ is equivalent to 
\begin{equation}\label{rescale}
\Delta_{g_{\epsilon}}u_{\epsilon}=DW(u_{\epsilon})=-(1-|u_{\epsilon}|^2)u_{\epsilon}
\end{equation}
in the dilated metric $g_{\epsilon}=\epsilon^{-2}g$. It follows that
$$\Delta_{g_{\epsilon}}\frac{1}{2}(1-|u_{\epsilon}|^2)=(1-|u_{\epsilon}|^2)|u_{\epsilon}|^2-|du_{\epsilon}|_{g_{\epsilon}}^2$$
and, via the Bochner formula,
$$\Delta_{g_{\epsilon}}\frac{1}{2}|du_{\epsilon}|_{g_{\epsilon}}^2=\frac{1}{2}|d|u_{\epsilon}|^2|_{g_{\epsilon}}^2-(1-|u_{\epsilon}|^2)|du_{\epsilon}|_{g_{\epsilon}}^2+\langle Ric_{g_{\epsilon}},du_{\epsilon}^*du_{\epsilon}\rangle_{g_{\epsilon}}+|Hess(u_{\epsilon})|_{g_{\epsilon}}^2.$$
Thus, setting
$$w:=\frac{1}{2}|du_{\epsilon}|_{g_{\epsilon}}^2-\frac{b}{2}(1-|u_{\epsilon}|^2),$$
for some constant $b$, we find that
\begin{eqnarray*}
\Delta_{g_{\epsilon}}w&\geq& (b-1)|du_{\epsilon}|_{g_{\epsilon}}^2+2|u_{\epsilon}|^2w+\epsilon^4\langle Ric_g,du_{\epsilon}^*du_{\epsilon}\rangle_g\\
&\geq &(b-1-\epsilon^2|Ric_g^-|_g)|du_{\epsilon}|_{g_{\epsilon}}^2+2|u_{\epsilon}|^2w.
\end{eqnarray*}
Writing $A(M,g):=\max_{x\in M}|Ric_g^-|_g$, it follows that for $b>1+A\epsilon^2$, $w$ must be negative at its maximum, and we therefore conclude that
$$|du_{\epsilon}|_{g_{\epsilon}}^2\leq (1+A\epsilon^2)(1-|u_{\epsilon}|^2)$$
everywhere. Scaling back, we arrive at the gradient estimate
\begin{equation}\label{bochest}
|du_{\epsilon}|_g^2\leq (\frac{1}{\epsilon^2}+A)(1-|u_{\epsilon}|^2).
\end{equation}

From (\ref{bochest}) and (\ref{wetcbounds}), we obtain the estimate
\begin{equation}\label{sigmadu}
\int_M(1-|u_{\epsilon}|^2)|du_{\epsilon}|^2\leq C'\int_M\frac{W(u_{\epsilon})}{\epsilon^2}\leq C'',
\end{equation}
so that the only nontrivial contribution to the energy blow-up must come from $ju_{\epsilon}$. That is, given a convergent subsequence $\mu_{\epsilon_j}\to\mu$, combining (\ref{wetcbounds}), (\ref{ududecomp}), and (\ref{sigmadu}), we conclude that 
\begin{equation}\label{jdominates}
\mu=\lim_{j\to \infty}\frac{1}{2}\frac{|ju_{\epsilon_j}|^2}{|\log\epsilon_j|}dv_g.
\end{equation}

\hspace{6mm} Now, following the arguments of \cite{BBO}, choose a smooth function $f: [0,1]\to [1,2]$ satisfying
\begin{equation}\label{fdef}
f(t)=\frac{1}{t}\text{ for }t\geq \frac{3}{4},\text{ }f(t)=1\text{ for }t\leq \frac{1}{2},\text{ and }|f'|\leq 2.
\end{equation}
Defining the one-forms
\begin{equation}\label{gamdef}
\gamma_{\epsilon}:=f(|u_{\epsilon}|^2)ju_{\epsilon},
\end{equation}
it follows from the choice of $f$ that
$$||\gamma_{\epsilon}|^2-|ju_{\epsilon}|^2|\leq C(1-|u_{\epsilon}|^2)|ju_{\epsilon}|^2\leq C(1-|u_{\epsilon}|^2)|du_{\epsilon}|^2,$$
and thus, in light of (\ref{jdominates}) and (\ref{sigmadu}), we have
\begin{equation}\label{gammadom}
\mu=\lim_{j\to\infty}\frac{1}{2}\frac{|\gamma_{\epsilon_j}|^2dv_g}{|\log\epsilon_j|}.
\end{equation}

\hspace{6mm} As in \cite{BBO}, our estimates for $\gamma_{\epsilon_j}$ will come from estimates on the components of its Hodge decomposition; naturally, this is the point in our analysis where the constraint $H^1_{dR}(M)=0$ becomes crucial. (We also assume throughout that $M$ is orientable, but this is ultimately of no analytic significance, as we can always pass to a double cover.) Choosing $\theta_{\epsilon}\in C^{\infty}(M)$ and $\xi_{\epsilon}\in \Omega^2(M)$ such that 
\begin{equation}\label{thetadef}
\Delta\theta_{\epsilon}=div(\gamma_{\epsilon})
\end{equation}
and 
\begin{equation}\label{xidef}
\Delta_H\xi_{\epsilon}=d\gamma_{\epsilon}
\end{equation}
(where $\Delta_H=dd^*+d^*d$ is the usual Hodge Laplacian), we use the fact that $H^1_{dR}(M)=0$ to conclude that
\begin{equation}\label{hodgedecomp}
\gamma_{\epsilon}=d\theta_{\epsilon}+d^*\xi_{\epsilon}.
\end{equation}

\begin{remark} Without the assumption that $H^1_{dR}(M)=0$, we could still carry out the analysis of this section provided we had some a priori control on the harmonic part of $\gamma_{\epsilon}$. However, without such control, we allow for solutions like those discussed in Remark \ref{harmex}, for which the harmonic part of $\gamma_{\epsilon}$ dominates the energy blow-up.
\end{remark}

\hspace{6mm} We show next that the exact part $d\theta_{\epsilon}$ of $\gamma_{\epsilon}$ contributes negligibly to $\mu$. Since $div(ju_{\epsilon})=0$, the defining equation (\ref{thetadef}) for $d\theta_{\epsilon}$ and (\ref{gamdef}) yield
$$\Delta \theta_{\epsilon}=div(\gamma_{\epsilon})=f'(|u_{\epsilon}|^2)\langle d|u_{\epsilon}|^2,ju_{\epsilon}\rangle.$$
Multiplying by $\theta_{\epsilon}$ and integrating, we see that the $L^2$ norm of $d\theta_{\epsilon}$ is given by
\begin{equation}
\int_M |d\theta_{\epsilon}|^2=-\int_M\theta_{\epsilon}f'(|u_{\epsilon}|^2)\langle d|u_{\epsilon}|^2,ju_{\epsilon}\rangle.
\end{equation}
Applying the coarea formula to the $|u_{\epsilon}|^2$ terms, we recast this as
\begin{eqnarray*}
\int_M|d\theta_{\epsilon}|^2&=&-\int_0^1f'(t)\left(\int_{\partial \{|u_{\epsilon}|^2<t\}}\langle \theta_{\epsilon}ju_{\epsilon},\nu\rangle\right)dt\\
&=&-\int_0^1f'(t)\left(\int_{\{|u_{\epsilon}|^2<t\}}div(\theta_{\epsilon}ju_{\epsilon})\right)dt\\
&=&-\int_0^1f'(t)\left(\int_{\{|u_{\epsilon}|^2<t\}}\langle d\theta_{\epsilon},ju_{\epsilon}\rangle\right)dt\\
&\leq &2\int_0^1\left(\int_{\{|u_{\epsilon}|^2<t\}}|d\theta_{\epsilon}||ju_{\epsilon}|\right)dt\\
&\leq &\int_0^1\left(\frac{1}{2}\int_{\{|u_{\epsilon}|^2<t\}}|d\theta_{\epsilon}|^2+2|ju_{\epsilon}|^2\right)dt,
\end{eqnarray*}
from which it follows that
\begin{equation}\label{coareacons}
\int_M|d\theta_{\epsilon}|^2\leq 4\int_0^1\left(\int_{\{|u_{\epsilon}|^2<t\}}|ju_{\epsilon}|^2\right)dt.
\end{equation}
Now, by (\ref{bochest}), we know that 
\begin{equation}\label{jbochbds}
|ju_{\epsilon}|^2\leq |du_{\epsilon}|^2\leq \frac{C}{\epsilon^2},
\end{equation}
while it follows from (\ref{wetcbounds}) that
\begin{equation}\label{volbounds}
|\{|u_{\epsilon}|^2\leq t\}|\leq \frac{4\epsilon^2}{(1-t)^2}\int_M\frac{W(u_{\epsilon})}{\epsilon^2}\leq \frac{C\epsilon^2}{(1-t)^2}.
\end{equation}
Splitting the right-hand side of (\ref{coareacons}) into integrals over $t\in [0,1-|\log\epsilon|^{-1/2}]$ and $t\in [1-|\log\epsilon|^{-1/2},1]$ (taking now $\epsilon<\frac{1}{e}$), we apply (\ref{jbochbds}), (\ref{volbounds}), and (\ref{cbounds}) to estimate
\begin{eqnarray*}
\int_M|d\theta_{\epsilon}|^2&\leq &4\int_0^{1-|\log\epsilon|^{-1/2}}\left(\int_{\{|u_{\epsilon}|^2<t\}}|ju_{\epsilon}|^2\right)dt+\int_{1-|\log\epsilon|^{-1/2}}^1\left(\int_{\{|u_{\epsilon}|^2<t\}}|ju_{\epsilon}|^2\right)dt\\
&\leq &4\int_0^{1-|\log\epsilon|^{-1/2}}\frac{C}{\epsilon^2}\cdot\frac{C\epsilon^2}{(1-t)^2}dt+|\log\epsilon|^{1/2}\mu_{\epsilon}(M)\\
&\leq &C'|\log\epsilon|^{1/2}.
\end{eqnarray*}
It then follows that, for any $U\subset M$,
\begin{eqnarray*}
\frac{1}{|\log\epsilon|}\int_U(|d\theta_{\epsilon}|^2+2|\langle d\theta_{\epsilon},d^*\xi_{\epsilon}\rangle|)&\leq &\frac{C}{|\log\epsilon|^{1/2}}+\frac{1}{|\log\epsilon|}\int_U |\log\epsilon|^{1/4}|d\theta_{\epsilon}|^2\\
&&+\frac{1}{|\log\epsilon|}\int_U|\log\epsilon|^{-1/4}|d^*\xi_{\epsilon}|^2\\
&\leq &\frac{C}{|\log\epsilon|^{1/4}}\\
&\to & 0\text{ as }\epsilon\to 0,
\end{eqnarray*}
and consequently, we obtain our final reduction
\begin{equation}\label{xired}
\mu=\lim_{j\to\infty}\frac{1}{2}\frac{|d^*\xi_{\epsilon_j}|^2dv_g}{|\log\epsilon_j|}.
\end{equation}

\hspace{6mm} Now, consider a point $x \in M$ at which the density $\Theta^{n-2}(\mu,x)=0$; that is, suppose
\begin{equation}\label{densvan}
\lim_{r\to 0}\frac{\mu(B_r(x))}{r^{n-2}}=0.
\end{equation}
To establish Lemma \ref{denslem}, we need to show that $x \notin spt(\mu)$; i.e., we need to find some ball $B_r(x)$ about $x$ for which
\begin{equation}\label{muvan}
\mu(B_r(x))=\lim_{j\to \infty}\frac{1}{2|\log\epsilon_j|}\int_{B_r(x)}|d^*\xi_{\epsilon_j}|^2=0.
\end{equation}

\hspace{6mm} We begin by arguing as in Section VII of \cite{BBO}. Let $\delta_0(M),\eta_0(M)>0$ be as in the $\eta$-ellipticity Theorem \ref{etaellip}; by (\ref{densvan}), we can select $R\in (0,\delta_0)$ such that 
$$\frac{1}{2}\eta_0\geq \frac{\mu(B_{2R}(x))}{R^{n-2}}=\lim_{\epsilon\to 0}\frac{R^{2-n}}{|\log\epsilon|}\int_{B_{2R}(x)}e_{\epsilon}(u_{\epsilon})=\lim_{\epsilon\to 0}\frac{R^{2-n}}{|\log (\frac{\epsilon}{R})|}\int_{B_{2R}(x)}e_{\epsilon}(u_{\epsilon}).$$
Applying Theorem \ref{etaellip} at each point in $B_R(x)$ for $\epsilon$ sufficiently small, we conclude that
\begin{equation}\label{ulowbound}
|u_{\epsilon}|^2(y)\geq \frac{7}{8}\text{ for every }y\in B_R(x),
\end{equation}
and observe that, by (\ref{fdef}) and the definition of $\gamma_{\epsilon}$, 
$$\gamma_{\epsilon}=\frac{1}{|u_{\epsilon}|^2}ju_{\epsilon}=j(u_{\epsilon}/|u_{\epsilon}|)\text{ on }B_R(x).$$
In particular, it follows that
\begin{equation}
dd^*\xi_{\epsilon}=d\gamma_{\epsilon}=0\text{ on }B_R(x),
\end{equation}
and defining $\varphi_{\epsilon}\in C^{\infty}(B_R(x))$ by
\begin{equation}\label{varphidef}
\Delta \varphi_{\epsilon}=0,\text{ }\frac{\partial \varphi_{\epsilon}}{\partial \nu}=d^*\xi_{\epsilon}(\nu)\text{ on }\partial B_R(x),\text{ and }\int_{B_R(x)} \varphi_{\epsilon}=0, 
\end{equation}
we have
\begin{equation}
d\varphi_{\epsilon}=d^*\xi_{\epsilon}\text{ on }B_R(x).
\end{equation}
Since $\Delta \varphi_{\epsilon}=0$ and $\int_{B_R(x)}\varphi_{\epsilon}=0$, it follows from standard elliptic estimates that
\begin{equation}\label{ellipregest}
\int_{B_{R/2}}|d^*\xi_{\epsilon}|^2=\int_{B_{R/2}}|d\varphi_{\epsilon}|^2\leq C_p\int_{B_R}|d\varphi_{\epsilon}|^p=C_p\int_{B_R}|d^*\xi_{\epsilon}|^p
\end{equation}
for any $p \in (1,\infty)$. We now recall one of the central observations of \cite{BBO}:
\begin{claim} For each $1\leq p<\frac{n}{n-1}$, there exists $C_p$ (independent of $\epsilon$) such that
\begin{equation}\label{xipest}
\int_M|d^*\xi_{\epsilon}|^p\leq C_p.
\end{equation}
\end{claim}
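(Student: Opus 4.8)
The plan is to establish the uniform bound $\|d\gamma_\epsilon\|_{L^1(M)}\le C$, and then read off (\ref{xipest}) from standard elliptic regularity for the Hodge Laplacian. The key observation — which is where the cutoff $f$ does its work — is that $d\gamma_\epsilon$ is supported in the small set $\{|u_\epsilon|^2\le \frac34\}$. Indeed, on the open set $\{|u_\epsilon|^2>\frac34\}$ the choice of $f$ in (\ref{fdef}) gives $\gamma_\epsilon=|u_\epsilon|^{-2}ju_\epsilon$, which is exactly the pullback by $u_\epsilon$ of the one-form $d\theta=|z|^{-2}(x_1\,dx_2-x_2\,dx_1)$ on $\mathbb{R}^2\setminus\{0\}$; since $d\theta$ is closed there, $d\gamma_\epsilon=0$ on $\{|u_\epsilon|^2>\frac34\}$, just as in the proof of Lemma \ref{toplem}. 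On $\{|u_\epsilon|^2\le \frac34\}$ one has $d\gamma_\epsilon=f'(|u_\epsilon|^2)\,d|u_\epsilon|^2\wedge ju_\epsilon+f(|u_\epsilon|^2)\,d(ju_\epsilon)$, and combining $|f|,|f'|\le 2$ with $|ju_\epsilon|\le|du_\epsilon|$, $|d|u_\epsilon|^2|\le 2|du_\epsilon|$, and $|d(ju_\epsilon)|=2|du_\epsilon^1\wedge du_\epsilon^2|\le|du_\epsilon|^2$ yields the pointwise bound $|d\gamma_\epsilon|\le 6|du_\epsilon|^2$ on this set.

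It then remains to bound $\int_{\{|u_\epsilon|^2\le 3/4\}}|du_\epsilon|^2$ independently of $\epsilon$, and here I would invoke the Bochner estimate (\ref{bochest}). On this set $1-|u_\epsilon|^2\ge\frac14$, so $1-|u_\epsilon|^2\le 4(1-|u_\epsilon|^2)^2$, and since $\|u_\epsilon\|_\infty\le 1$ forces $W(u_\epsilon)=\frac14(1-|u_\epsilon|^2)^2$, (\ref{bochest}) gives $|du_\epsilon|^2\le(\epsilon^{-2}+A)(1-|u_\epsilon|^2)\le 16(\epsilon^{-2}+A)W(u_\epsilon)$ there. Integrating and using (\ref{wetcbounds}),
\[
\int_{\{|u_\epsilon|^2\le 3/4\}}|du_\epsilon|^2\le 16(1+A\epsilon^2)\int_M\frac{W(u_\epsilon)}{\epsilon^2}\le C ,
\]
so that $\|d\gamma_\epsilon\|_{L^1(M)}\le C$ with $C$ independent of $\epsilon$. (One could equally well combine the crude gradient bound (\ref{jbochbds}) with the volume estimate (\ref{volbounds}).)

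For the last step, note that since $d\gamma_\epsilon=\Delta_H\xi_\epsilon$ is exact it is $L^2$-orthogonal to the harmonic $2$-forms, and $d^*\xi_\epsilon$ is unchanged if $\xi_\epsilon$ is replaced by its component orthogonal to them; so we may take $\xi_\epsilon=G(d\gamma_\epsilon)$, where $G$ is the Green operator of $\Delta_H$. For $1\le p<\frac{n}{n-1}$ the conjugate exponent satisfies $p'>n$, so Morrey's inequality $W^{1,p'}(M)\hookrightarrow C^0(M)$ gives, by duality, the continuous inclusion $L^1(M)\hookrightarrow W^{-1,p}(M)$; together with the standard bound $\|Gh\|_{W^{1,p}}\le C_p\|h\|_{W^{-1,p}}$ this yields $\|\xi_\epsilon\|_{W^{1,p}(M)}\le C_p\|d\gamma_\epsilon\|_{L^1(M)}\le C_p$, and since $|d^*\xi_\epsilon|\le|\nabla\xi_\epsilon|$ pointwise we obtain (\ref{xipest}).

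The crux is the $L^1$ bound: by itself $|d(ju_\epsilon)|\lesssim|du_\epsilon|^2$ is only $O(|\log\epsilon|)$ in $L^1$, so it is essential both that the cutoff $f$ confines $d\gamma_\epsilon$ to the region $\{|u_\epsilon|^2\le\frac34\}$ and that this region has measure $O(\epsilon^2)$ while $|du_\epsilon|^2=O(\epsilon^{-2})$ on it. The exponent restriction $p<\frac{n}{n-1}$ is exactly the (non-endpoint) threshold at which the $L^1\to W^{1,p}$ elliptic estimate used above holds.
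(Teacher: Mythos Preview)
Your proof is correct and follows essentially the same approach as the paper: first establish $\|d\gamma_\epsilon\|_{L^1}\le C$ by noting that $d\gamma_\epsilon$ is supported in $\{|u_\epsilon|^2\le\frac34\}$ (measure $O(\epsilon^2)$) with $|d\gamma_\epsilon|=O(\epsilon^{-2})$ there, and then pass from the $L^1$ bound on $d\gamma_\epsilon=\Delta_H\xi_\epsilon$ to the $L^p$ bound on $d^*\xi_\epsilon$ via elliptic regularity for $p<\frac{n}{n-1}$. The only cosmetic differences are that the paper obtains the $L^1$ bound by the crude combination (\ref{jbochbds})+(\ref{volbounds}) (which you mention as an alternative), and that for the elliptic step the paper spells out the duality argument explicitly---testing $d^*\xi_\epsilon$ against $\beta$ with $\|\beta\|_{L^q}=1$, $q=p'>n$, rewriting as $\int\langle d\gamma_\epsilon,d\alpha\rangle$ with $\alpha=\Delta_H^{-1}\beta$, and using $\|d\alpha\|_{L^\infty}\le C_q\|\beta\|_{L^q}$---whereas you package the same idea as $L^1\hookrightarrow W^{-1,p}$ plus $\|G\cdot\|_{W^{1,p}}\le C_p\|\cdot\|_{W^{-1,p}}$.
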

Once the claim is established, we'll obtain from (\ref{ellipregest}) a uniform bound
$$\int_{B_{R/2}(x)}|d^*\xi_{\epsilon}|^2\leq C$$
independent of $\epsilon$, and as a result,
\begin{equation}
\mu(B_{R/2}(x))\leq\lim_{j\to\infty}\frac{1}{|\log\epsilon_j|}\int_{B_{R/2}(x)}\frac{1}{2}|d^*\xi_{\epsilon_j}|^2=0,
\end{equation}
which is precisely what we needed to complete the proof of Lemma \ref{denslem}.

\hspace{6mm} To prove the claim, we follow closely the arguments in Section VI and the Appendix of \cite{BBO}. Fix $p \in [1,\frac{n}{n-1})$, and denote by $q=\frac{p}{p-1}>n$ its H\"{o}lder conjugate. The $L^p$ norm of $d^*\xi_{\epsilon}$ is then given by
\begin{equation}\label{lpnorm}
\|d^*\xi_{\epsilon}\|_{L^p(M)}=\sup\{\int_M\langle d^*\xi_{\epsilon},\beta\rangle \mid \beta \in \Omega^1(M),\text{ }\|\beta\|_{L^q}=1\}.
\end{equation}
Since $d^*\xi_{\epsilon}$ is co-exact, for any $\beta \in \Omega^1(M)$, we have that
\begin{equation}\label{pairing}
\int_M\langle d^*\xi_{\epsilon},\beta\rangle=\int_M\langle d^*\xi_{\epsilon},d^*d\alpha\rangle=\int_M\langle dd^*\xi_{\epsilon},d\alpha\rangle,
\end{equation}
where $\alpha:=\Delta_H^{-1}\beta$ is the unique solution of $\Delta_H\alpha=\beta$. (If we allowed $H^1_{dR}(M)\neq 0$, we would of course have to carry this out on the orthogonal complement of the harmonic one-forms.) Now, it follows from the $L^q$ regularity theory of the Hodge Laplacian (see \cite{Sc} for a careful treatment) that
$$\|d\alpha\|_{W^{1,q}}\leq C_q\|\beta\|_{L^q},$$
and since $q>n$, the Sobolev inequality yields
\begin{equation}\label{linftybound}
\|d\alpha\|_{L^{\infty}}\leq C_q\|\beta\|_{L^q}.
\end{equation}
Recalling that 
$$dd^*\xi_{\epsilon}=d\gamma_{\epsilon},$$
we can combine (\ref{lpnorm})-(\ref{linftybound}) to obtain the bound
\begin{equation}\label{l1control}
\|d^*\xi_{\epsilon}\|_{L^p(M)}\leq C_p\|d\gamma_{\epsilon}\|_{L^1}.
\end{equation}
All that remains is to bound the $L^1$ norm of $d\gamma_{\epsilon}$ as in \cite{BBO}, and this is straightforward. As we noted earlier, it follows from the definition of $f$ that 
$$\gamma_{\epsilon}=j(\frac{u_{\epsilon}}{|u_{\epsilon}|})\text{ is closed on }\{|u_{\epsilon}|^2\geq \frac{3}{4}\},$$
so that
$$spt(d\gamma_{\epsilon})\subset \{|u_{\epsilon}|^2\leq \frac{3}{4}\},$$
and by (\ref{volbounds}),
\begin{equation}\label{sptbounds}
|spt(d\gamma_{\epsilon})|\leq C\epsilon^2.
\end{equation}
Next, noting that
\begin{eqnarray*}
|d\gamma_{\epsilon}|&=&|f'(|u_{\epsilon}|^2)d|u_{\epsilon}|^2\wedge ju_{\epsilon}+f(|u_{\epsilon}|^2)dju_{\epsilon}|\\
&\leq &C|du_{\epsilon}^1\wedge du_{\epsilon}^2|,
\end{eqnarray*}
we conclude from (\ref{bochest}) that
\begin{equation}\label{dgsupbounds}
|d\gamma_{\epsilon}|\leq \frac{C}{\epsilon^2}
\end{equation}
pointwise. Combining (\ref{sptbounds}) and (\ref{dgsupbounds}), we obtain the desired $L^1$ bound
\begin{equation}\label{dgl1bound}
\|d\gamma_{\epsilon}\|_{L^1}\leq C,
\end{equation}
which together with (\ref{l1control}) completes the proof of the claim. The conclusion of Lemma \ref{denslem} and, consequently, Theorem \ref{thm2} then follow.

\end{document}